\theoremstyle{plane}
\newtheorem{thm}{Theorem}[section]
\newtheorem{lem}[thm]{Lemma}
\newtheorem{prop}[thm]{Proposition}
\theoremstyle{definition}
\newtheorem*{rmk}{Remark}
\newtheorem*{ack}{Acknowledgement}
\newtheorem{blank}[thm]{}
\newtheorem{example}[thm]{Example}
\newcommand{\A}{\mathbb{A}}
\newcommand{\Z}{\mathbb{Z}}
\newcommand{\N}{\mathbb{N}}
\newcommand{\Q}{\mathbb{Q}}
\let\P\relax
\newcommand{\P}{\mathbb{P}}
\newcommand{\C}{\mathbb{C}}
\let\k\relax
\newcommand{\k}{\mathbf{k}}
\newcommand{\B}{\mathcal{B}}
\newcommand{\cP}{\mathcal{P}}
\let\S\relax
\newcommand{\S}{\mathcal{S}}
\newcommand{\g}{\mathfrak{g}}
\let\b\relax
\newcommand{\b}{\mathfrak{b}}
\newcommand{\gl}{\mathfrak{gl}}
\newcommand{\so}{\mathfrak{so}}
\let\sp\relax
\newcommand{\sp}{\mathfrak{sp}}
\let\l\relax
\newcommand{\l}{\ell}
\newcommand{\spr}{\mathbf{TSp}}
\newcommand{\ec}{\mathbf{EC}}
\newcommand{\br}[1]{\left\langle{#1}\right\rangle}
\newcommand{\brr}[1]{\left({#1}\right)}
\newcommand{\qlbar}{{\overline{\mathbb{Q}_\ell}}}
\newcommand{\floor}[1]{\left\lfloor#1\right\rfloor}
\DeclareMathOperator{\sgn}{\textup{sgn}}
\DeclareMathOperator{\im}{\textup{im}}
\DeclareMathOperator{\Ind}{\textup{Ind}}
\DeclareMathOperator{\Res}{\textup{Res}}
\DeclareMathOperator{\tA}{\tilde{\textit{A}}}
\DeclareMathOperator{\oA}{\overline{\textit{A}}}
\DeclareMathOperator{\tr}{\textup{tr}}
\DeclareMathOperator{\ch}{\textup{char}}
\title{Euler Characteristic of Springer fibers}
\author{Dongkwan Kim}
\address{Department of Mathematics\\
  Massachusetts Institute of Technology\\
  Cambridge, MA 02139-4307\\
  U.S.A.}
\email{sylvaner@math.mit.edu}
\date{\today}							
\begin{document}
\begin{abstract} For Weyl groups of classical types, we present formulas to calculate the restriction of Springer representations to a maximal parabolic subgroup of the same type. As a result, we give recursive formulas for Euler characteristics of Springer fibers for classical types. We also give tables of those for exceptional types.
\end{abstract}

\maketitle

\renewcommand\contentsname{}
\tableofcontents

\section{Introduction}
Suppose that we have a reductive group $G$ over an algebraic closed field $\k$ and its Lie algebra $\g$. Let $\B$ be the set of Borel subalgebras of $\g$ and for any $N \in \g$ define
$$\B_N \colonequals \{ \b \in \B \mid N \in \b \}$$
which we call the Springer fiber corresponding to $N$. It is currently one of the main objects in (geometric) representation theory.

Let $W$ be the Weyl group of $G$. \cite{spr} defined an action of $W$ on the cohomology of Springer fibers when $\k$ is an algebraic closure of some finite field, which is now called the Springer representation. It was reconstructed in more general setting by \cite{lu:green} using the theory of perverse sheaves, which differs from \cite{spr} by the sign character of $W$. 

In this paper we present formulas to calculate the restriction of Springer representations to a maximal parabolic subgroup $W' \subset W$ of the same type when $G$ is of classical type. In particular, we have recursive formulas for Euler characteristics of Springer fibers. Furthermore, we have recursive formulas to calculate the multiplicities in these representations of each irreducible representation of $A_N$, the component group of the stabilizer of $N$ by adjoint action of $G$ on $\g$. For $G$ of exceptional type, we give a table for such multiplicities in each Springer representation in Appendix \ref{sec:exceptional}.

In some special cases we also have such recursive formulas for the multiplicities of irreducible representations of $A_N$ in the cohomology of Springer fibers on each degree. As a result we present closed formulas of such multiplicities for ``two-row" cases.

Note that the Green functions of $G$ can be calculated using the Lusztig-Shoji algorithm (e.g. \cite{shoji}, \cite{lu:char5}), and it is also possible to obtain information of Springer representations from Green functions. But our method is more elementary and does not use orthogonality of Green functions, which is crucial for the Lusztig-Shoji algorithm.

\begin{ack} The author thanks George Lusztig for giving thoughtful comments, revising drafts of this paper, and encouraging the author to publish it.
\end{ack}

\section{Notations and Preliminaries}
\begin{blank}
Let $\k$ be an algebraically closed field. Throughout this paper we assume that $G$ is $GL_n$, $SO_{2n+1}$, $Sp_{2n}$, or $SO_{2n}$ over $\k$, except in Appendix \ref{sec:exceptional}. Here $n\geq 1$, unless $G=SO_{2n}$ in which case $n \geq 2$. We identify $G$ with the set of its $\k$-points $G(\k)$. We also assume that $\ch \k$ is good; there is no assumption on $\ch \k$ if $G=GL_n$ and $\ch \k \neq 2$ otherwise. If $G=GL_n$ (resp. $G=SO_{2n+1}, Sp_{2n}, SO_{2n}$) we define $V$ to be a $\k$-vector space of dimension $n$ (resp. $2n+1, 2n, 2n$), respectively, on which $G$ naturally acts. Also when $G= SO_{2n+1}$, (resp. $Sp_{2n}, SO_{2n},$) $V$ is equipped with a symmetric (resp. symplectic, symmetric) bilinear form $\br{\ , \ }$ which is invariant under the action of $G$, i.e. for any $v, w \in V$ and $g \in G$ we have $\br{gv, gw} = \br{v, w}.$
\end{blank}

\begin{blank}
Let $W$ be the Weyl group of $G$ and $S = \{s_1, \cdots, s_n\} \subset W$ be the set of simple reflections of $W$ such that $(W, S)$ is a Coxeter group. We choose $s_i$ such that the labeling corresponds to one of the following Dynkin diagrams.

\begin{center}
Type $A_n$:  \begin{tikzpicture}[scale=.7]
	\node at (0 cm,0) {$s_1$};
	\node at (2 cm,0) {$s_2$};
	\node at (4 cm,0) {$s_3$};
	\node at (6 cm,0) {$\cdots$};
	\node at (8 cm,0) {$s_n$};

    \draw[thick] (0 cm,0) circle (3 mm);
    \draw[thick] (2 cm,0) circle (3 mm);
    \draw[thick] (4 cm,0) circle (3 mm);
    \draw[thick] (8 cm,0) circle (3 mm);
    
    \draw[thick] (0.3, 0) -- (1.7,0);
    \draw[thick] (2.3, 0) -- (3.7,0);
    \draw[thick] (4.3, 0) -- (5.5,0);
    \draw[thick] (6.5, 0) -- (7.7,0);
  \end{tikzpicture}

Type $BC_n$:  \begin{tikzpicture}[scale=.7]
	\node at (0 cm,0) {$s_1$};
	\node at (2 cm,0) {$s_2$};
	\node at (4 cm,0) {$s_3$};
	\node at (6 cm,0) {$\cdots$};
	\node at (8 cm,0) {$s_n$};

    \draw[thick] (0 cm,0) circle (3 mm);
    \draw[thick] (2 cm,0) circle (3 mm);
    \draw[thick] (4 cm,0) circle (3 mm);
    \draw[thick] (8 cm,0) circle (3 mm);
    
    \draw[thick,double] (0.3, 0) -- (1.7,0);
    \draw[thick] (2.3, 0) -- (3.7,0);
    \draw[thick] (4.3, 0) -- (5.5,0);
    \draw[thick] (6.5, 0) -- (7.7,0);
  \end{tikzpicture}
  
Type $D_n$:  
\begin{tikzpicture}[baseline=(tes.base), scale=.7]
	\node at (0 cm,1cm)  {$s_1$};
	\draw[thick] (0 cm,1cm) circle (3 mm);
	\node at (0 cm,-1cm) {$s_2$};
	\draw[thick] (0 cm,-1cm) circle (3 mm);
	\node at (2 cm, 0) (tes) {$s_3$};
	\draw[thick] (2 cm, 0) circle (3 mm);
	\node at (4 cm, 0) {$\cdots$};	
	\node at (6 cm, 0) {$s_n$};
	\draw[thick] (6 cm, 0) circle (3 mm);

	\draw[thick] (1.8 cm, 2mm)  -- +(-15 mm, 8mm);
	\draw[thick] (1.8 cm,-2mm)  -- +(-15 mm,-8mm);
	\draw[thick] (2.3 cm,0)  -- +(12 mm,0);
	\draw[thick] (4.5 cm,0)  -- +(12 mm,0);
  \end{tikzpicture}
\end{center}
\end{blank}

\begin{blank}
Let $\g=\gl_n$ (resp. $\so_{2n+1}, \sp_{2n},$ $\so_{2n}$) be the Lie algebra of $G=GL_n$ (resp. $SO_{2n+1}$, $Sp_{2n},$ $SO_{2n}$). Then if $\g =\so_{2n+1}, \sp_{2n},$ or $\so_{2n}$ there is a natural Lie algebra action of $\g$ on $V$ which respects $\br{\ , \ }$, i.e. for any $v, w \in V$ and $N \in \g$ we have $\br{Nv, w}+\br{v, Nw} = 0$.
\end{blank}

\begin{blank}
Let $\B$ be the flag variety of $G$, i.e. the set of Borel subgroups of $G$, or equivalently, the set of Borel subalgebras of $\g$. If $G=GL_n$, then $\B$ is isomorphic to the variety of full flags in $V$. If $G=SO_{2n+1}$ or $Sp_{2n}$, then $\B$ is isomorphic to the variety of full isotropic flags in $V$. If $G=SO_{2n}$, then $\B$ is isomorphic to the variety of isotropic flags $0 \subsetneq V_1 \subsetneq \cdots \subsetneq V_{n-1}$ in $V$ where $\dim V_i = i$. For $N \in \g$, we write $\B_N$ to be the Springer fiber of $N$, i.e. the set of Borel subalgebras of $\g$ that contains $N \in \g$. 
\end{blank}

\begin{blank}
For $G=GL_n$ with $n\geq 1$, (resp. $SO_{2n+1}$ with $n\geq 1$, $Sp_{2n}$ with $n\geq 1$, $SO_{2n}$ with $n\geq 2$,) we define $G'=GL_{n-1}$ (resp. $SO_{2n-1}$, $Sp_{2n-2}, SO_{2n-2}$) and $\g' = \gl_{n-1}$ (resp. $\g'=\so_{2n-1}$, $\sp_{2n-2}, \so_{2n-2}$). Also we define $\B'$ to be the flag variety of $G'$ and $W'$ to be the Weyl group of $G'$. We regard $W'$ as a subgroup of $W$ generated by $S'=\{s_1, \cdots, s_{n-1}\} \subset S$, except when $G=SO_{4}$ in which case we regard $W' = \{id\}\subset W$. If $N' \in \g'$, let $ \B'_{N'}$ be the Springer fiber corresponding to $N'$ with respect to $G'$.
\end{blank}

\begin{blank}
For a variety $X$, let $H^*(X):=\sum_{i \in \N} (-1)^iH^i(X, \qlbar)$ be the alternating sum of $\l$-adic cohomology of $X$ in the Grothendieck group of vector spaces. If $\k = \C$, then by comparison theorem it is equivalent to the alternating sum of complex cohomology of $X^{an}$. Similarly, let $H^*_c(X):=\sum_{i \in \N} (-1)^iH^i_c(X, \qlbar)$ be the alternating sum of $\l$-adic cohomology with compact support of $X$.
\end{blank}

\begin{blank}
Let $\lambda$ be a partition. We write $\lambda \vdash n$ or $|\lambda|=n$ if $\lambda$ is a partition of $n$. We describe each part of $\lambda$ by writing $\lambda = (\lambda_1, \lambda_2, \cdots, \lambda_r) \vdash n$ where $\lambda_1\geq \lambda_2 \geq \cdots \geq \lambda_r >0$ and $|\lambda| = n$. Or we also write $\lambda = (1^{r_1}2^{r_2}\cdots)$ which means that $\lambda$ consists of $r_1$ parts of 1, $r_2$ parts of 2, and so on.
\end{blank}

\begin{blank} \label{note:partition}
Let $\lambda = (1^{r_1}2^{r_2}3^{r_3}\cdots)$. Then for $i \geq 1$ we define $\lambda^i, \lambda^{h,i}, \lambda^{v,i}$ as follows.
\begin{enumerate}
\item If $r_i \geq 1$, define $\lambda^i \colonequals (1^{r_1}2^{r_2} \cdots (i-1)^{r_{i-1}+1}i^{r_i -1} \cdots) \vdash |\lambda|-1$. This corresponds to removing a box from the Young diagram of $\lambda$.
\item If $r_i \geq 1$ and $i \geq 2$, we let $\lambda^{h,i}\colonequals(1^{r_1}2^{r_2}\cdots (i-2)^{r_{i-2}+1} (i-1)^{r_{i-1}}i^{r_i-1}\cdots) \vdash |\lambda|-2$. Here the superscript $``h"$ stands for ``horizontal"; $\lambda^{h,i}$ is obtained by removing a horizontal domino from the Young diagram of $\lambda$ and rearranging rows if necessary.
\item If $r_i \geq 2$, we let $\lambda^{v,i}\colonequals(1^{r_1}2^{r_2}\cdots  (i-1)^{r_{i-1}+2}i^{r_i-2}\cdots)\vdash |\lambda|-2$. Here the superscript $``v"$ stands for ``vertical"; $\lambda^{v,i}$ is obtained by removing a vertical domino from the Young diagram of $\lambda$. 
\end{enumerate}
\end{blank}

\begin{blank}
We recall the correspondence between nilpotent adjoint orbits in $\g$ under the adjoint action of $G$ and partitions. If $G=GL_n$ then such orbits are parametrized by partitions of $n$. This correspondence is given by taking the sizes of Jordan blocks of any element in a nilpotent orbit regarded as an endomorphism on $V$. Likewise, if $G=SO_{2n+1}$, then nilpotent adjoint orbits in $\g$ are parametrized by $\lambda=(1^{r_1} 2^{r_2} 3^{r_3} \cdots) \vdash 2n+1$ such that $r_{2i} \equiv 0\pmod{2}$ for $i \geq 1.$ If $G=Sp_{2n}$, then nilpotent adjoint orbits in $\g$ are parametrized by $\lambda=(1^{r_1} 2^{r_2} 3^{r_3} \cdots) \vdash 2n$ such that $r_{2i-1} \equiv 0\pmod{2}$ for $i \geq 1$. In these cases we write $N_\lambda \in \g$ to be such a nilpotent element corresponding to $\lambda \vdash n$. It is well-defined up to adjoint action by $G$.
\end{blank}



\begin{blank}
If $G = SO_{2n}$, then it is almost the same as the case $G=SO_{2n+1}$, so if $\lambda=(1^{r_1} 2^{r_2} 3^{r_3} \cdots) \vdash 2n$ is a partition of the sizes of Jordan blocks of some nilpotent element in $\g$, then  $r_{2i} \equiv 0\pmod{2}$ for $i \geq 1.$ However this correspondence is no longer one-to-one since a partition consisting of even parts with even multiplicities, which we call \emph{very even}, corresponds to two adjoint nilpotent orbits in $\g$. Thus if $\lambda \vdash 2n$ is not very even, we write $N_\lambda \in \g$ to be such a nilpotent element corresponding to $\lambda$ which is again well-defined up to adjoint action by $G$. If $\lambda \vdash 2n$ is very even, then we write $N_{\lambda+}, N_{\lambda-}$ to distinguish two such nilpotent elements corresponding to $\lambda$ in different adjoint orbits. If there is no ambiguity or need to differentiate $N_{\lambda+}$ and $N_{\lambda-}$, we still write $N_{\lambda}$.
\end{blank}

\begin{blank} \label{note:cent}
Let $\tilde{G} = GL_n$ (resp. $O_{2n+1}, Sp_{2n}, O_{2n}$)  if $G=GL_n$ (resp. $SO_{2n+1}, Sp_{2n}, SO_{2n}$). For a nilpotent $N \in \g$, we define $\tA_N$ to be the component group of the stabilizer of $N$ in $\tilde{G}$. Also for a partition $\lambda$, define $\tA_\lambda \colonequals \tA_{N_\lambda}$. If $G=SO_{2n}$ and $\lambda$ is very even, then as $\tA_{N_{\lambda+}} = \tA_{N_{\lambda-}} = \{id\}$, we define $\tA_{\lambda} \colonequals \{id\}$. If $\tilde{G}=GL_n$, any $\tA_{N}$ is trivial. Otherwise if $\tilde{G} = Sp_{2n}$ (resp. $O_{2n+1}$ or $O_{2n}$) and if a partition $\lambda = (1^{r_1} 2^{r_2} \cdots)$ corresponds to a nilpotent element in $\g$, then $\tA_\lambda$ is a product of $\Z/2$ generated by $z_i$ for each even $i$ (resp. odd $i$) such that $r_i>0$. (Here we adopt the convention that $z_i = id$ if $i$ does not satisfy the aforementioned condition.) Likewise, we define $A_N$ to be the component group of the stabilizer of $N$ in $G$ and $A_{\lambda} \colonequals A_{N_\lambda}$, again even when $\lambda$ is very even (in which case $A_\lambda$ is trivial). Then $A_N$ can be considered as a subgroup of $\tA_N$. If $G=GL_n$ or $Sp_{2n}$, $A_N = \tA_N$. Otherwise, for $\lambda = (1^{r_1}2^{r_2}\cdots)$ $A_\lambda$ is the subgroup of $\tA_\lambda$ generated by $\{z_i z_j  \mid  i, j \textup{ odd, } r_i, r_j >0\}$. 
\end{blank}

\begin{blank}
For a partition $\lambda$, define $H^i(\lambda) \colonequals H^i(\B_{N_\lambda})$ and $H^i(\lambda+) \colonequals H^i(\B_{N_{\lambda+}}), H^i(\lambda-) \colonequals H^i(\B_{N_{\lambda-}})$ if $G=SO_{2n}$ and $\lambda$ is very even.
We regard $H^i(\B_N)$ as $W$-modules using Springer theory, adopting the definition of \cite{lu:green}. 
We also consider the action of $\tA_N$ on $H^*(\B_N)$ which is induced from the action of $\tA_N$ on $\B_N$. Note that the action of $W$ and $A_N \subset \tA_N$ commute \cite[6.1]{spr}. We denote by $\spr(\lambda)$ the character of $H^*(\lambda)$ as a $W\times A_{\lambda}$-module. If $G=SO_{2n}$ and $\lambda$ is very even, we similarly define $\spr(\lambda+)$ and $\spr(\lambda-)$. Note that this definition does not depend on the base field $\k$ insofar as $\ch \k$ is good. Define $\ec(\lambda) \colonequals \dim H^*(\B_{N_\lambda})$ to be the $\ell$-adic Euler (or Euler-Poincar\'e) characteristic of $\B_{N_\lambda}$. This is well-defined even when $\lambda$ is very even as $\B_{N_{\lambda+}}$ and $\B_{N_{\lambda-}}$ are isomorphic. We also define $h^k(\lambda):=\dim H^k(\lambda)$ be the $k$-th Betti number of $\B_{N_\lambda}$.
\end{blank}


\section{Main theorem (weak form)}
The (weak version of the) main theorem in this paper is as follows.
\begin{thm}\label{thm:main} Recall the notations in \ref{note:partition}.
\begin{enumerate}[i)]
\item Let $G=GL_n$ and $\lambda=(1^{r_1}2^{r_2}3^{r_3}\cdots)$ be a partition corresponding to a nilpotent adjoint orbit of $\g$. Then we have
$$\ec(\lambda) = \sum_{r_i \geq 1} r_i \ec(\lambda^i).$$
\item Let $G=SO_{2n+1}$, $Sp_{2n}$, or $SO_{2n}$ and $\lambda=(1^{r_1}2^{r_2}3^{r_3}\cdots)$ be a partition corresponding to a nilpotent adjoint orbit of $\g$. Then we have
$$\ec(\lambda) = \sum_{i\geq 2,\ r_i \textup{ odd}}\ec(\lambda^{h,i}) + \sum_{i\geq 1,\ r_i \geq 2} 2\floor{\frac{r_i}{2}}\ec(\lambda^{v,i}).$$
Here $\floor{\frac{r_i}{2}}$ is the greatest integer that is not bigger than $\frac{r_i}{2}$.
\end{enumerate}
\end{thm}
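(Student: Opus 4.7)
The plan is to derive both recursions by projecting the Springer fiber onto a suitable variety of ``first steps'' of a flag, stratifying the base by the Jordan type of the induced nilpotent on the resulting quotient, and combining additivity and multiplicativity of the $\ell$-adic Euler characteristic. The same mechanism works across all four classical types; only the base and the shape of the strata change.

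For part (i), I would use the map $\pi\colon \B_{N_\lambda} \to \P(\ker N_\lambda)$ sending an $N_\lambda$-stable flag $0\subsetneq V_1\subsetneq\cdots$ to $V_1$, which is well-defined since a nilpotent operator has no nonzero eigenvector outside its kernel. The fiber of $\pi$ over $\ell$ is the Springer fiber for the induced nilpotent on $V/\ell$ with respect to $G'=GL_{n-1}$, and this induced nilpotent has Jordan type $\lambda^i$ precisely when $\ell \in \im N_\lambda^{i-1}\cap \ker N_\lambda$ but $\ell \notin \im N_\lambda^{i}\cap \ker N_\lambda$. The $\lambda^i$-stratum of $\P(\ker N_\lambda)$ is therefore the difference of two nested projective subspaces whose dimensions differ by $r_i$, hence has Euler characteristic $r_i$. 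Additivity and multiplicativity over the stratified proper map $\pi$ then yield $\ec(\lambda) = \sum_{r_i\geq 1} r_i\, \ec(\lambda^i)$, which is (i).

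For part (ii) the same construction applies with $\pi$ now landing in the variety $\mathcal{Q}$ of isotropic lines inside $\ker N_\lambda$: this is all of $\P(\ker N_\lambda)$ for $G=Sp_{2n}$, and the intersection of $\P(\ker N_\lambda)$ with the standard quadric otherwise. The fiber over an isotropic line $\ell$ is the Springer fiber of $N_\lambda$ acting on $\ell^\perp/\ell$ with respect to $G'$. Stratifying $\mathcal{Q}$ by the Jordan type of this induced nilpotent yields two families of strata: those producing quotient type $\lambda^{h,i}$ (horizontal domino removal, allowed only when $r_i$ is odd, as dictated by the parity constraints on admissible partitions in each classical type), contributing Euler characteristic $1$; and those producing type $\lambda^{v,i}$ (vertical domino removal, requiring $r_i \geq 2$), contributing Euler characteristic $2\floor{r_i/2}$. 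Multiplying by the fiber Euler characteristics $\ec(\lambda^{h,i})$ and $\ec(\lambda^{v,i})$ reproduces (ii).

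The main obstacle is precisely this stratification for types B, C, D: identifying the loci of isotropic $\ell\subset\ker N_\lambda$ whose $\ell^\perp/\ell$-quotient of $N_\lambda$ has type $\lambda^{h,i}$ or $\lambda^{v,i}$, and computing their Euler characteristics to recover the constants $1$ and $2\floor{r_i/2}$. This needs a normal-form choice for $N_\lambda$ together with the bilinear form and a careful distinction between ``self-paired'' Jordan blocks and pairs of blocks swapped by the form. Alternatively, and more in line with the paper's announced strategy, once the stronger restriction formula $\Res^W_{W'}\spr(\lambda)=\sum c_{\lambda,\mu}\spr'(\mu)$ is established, taking dimensions on both sides --- using $\dim\spr(\mu)=\ec(\mu)$ --- gives (i) and (ii) without any further geometric input.
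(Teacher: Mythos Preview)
Your proposal is correct and follows essentially the same geometric route as the paper: project $\B_{N_\lambda}$ to (isotropic) lines in $\ker N_\lambda$, stratify the base by the Jordan type of the induced nilpotent on $V/\ell$ (resp.\ $\ell^\perp/\ell$), invoke local triviality of $\pi$ on each stratum (this is the Shimomura/Srinivasan input you flag as the main obstacle), and conclude by additivity and multiplicativity of $\chi_c$; the paper then upgrades this to a statement about $W'\times A_\lambda$-characters, from which the Euler-characteristic identity is recovered by evaluating at the identity, exactly as you note at the end.

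One correction worth recording: the restriction to $r_i$ odd in the $\lambda^{h,i}$ sum is \emph{not} forced by admissibility of $\lambda^{h,i}$ for $\g'$.  In the relevant case (symmetric induced form on $W_{i-1}/W_i$) the partition $\lambda^{h,i}$ is admissible for any $r_i\geq 1$; what happens is that the $\lambda^{h,i}$-locus in the base decomposes into pieces isomorphic to $\A^{k}\setminus\A^{k-1}$ (each of Euler characteristic $0$), together with one extra piece $\simeq\A^{\lfloor r_i/2\rfloor}$ appearing only when $r_i$ is odd.  So the constant $1$ (and its disappearance for $r_i$ even) is a consequence of the Euler-characteristic computation itself, not of the parity constraints on partitions.
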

Indeed, this can be deduced from more general statements, i.e. Theorem \ref{thm:mainA}, Theorem \ref{thm:bettiA}, Theorem \ref{thm:mainBCD}, and Theorem \ref{thm:bettiBCD}, which provide isomorphisms of $W'\times A_\lambda$-modules. In subsequent sections we prove such generalizations.

%
%
%
%

\section{Type $A$}
For $G=GL_n$, the following theorem generalizes Theorem \ref{thm:main}.
\begin{thm} \label{thm:mainA} Let $G=GL_n$ and $\lambda=(1^{r_1}2^{r_2}3^{r_3}\cdots)\vdash n$. Then we have 
$$\Res_{W'}^W \spr(\lambda) = \sum_{r_i \geq 1} r_i \spr(\lambda^i)$$
as characters of $W'$.
\end{thm}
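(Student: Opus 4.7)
My approach is geometric: exhibit a natural projection from $\B_{N_\lambda}$ onto a projective space, stratify the base by Jordan type of the induced quotient nilpotent, and then track the $W'$-action through this fibration. Since any $N_\lambda$-stable line must lie in $\ker N_\lambda$ (as $N_\lambda$ is nilpotent), sending $F_\bullet \mapsto F_1$ defines a morphism $\pi\colon \B_{N_\lambda}\to \P(\ker N_\lambda)$. For each $L\in \P(\ker N_\lambda)$, the fiber $\pi^{-1}(L)$ is canonically the Springer fiber $\B'_{\overline{N_\lambda}}$ in $G'=GL(V/L)\cong GL_{n-1}$ for the induced nilpotent $\overline{N_\lambda}$ on $V/L$.

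I would then stratify the base by the Jordan type of $\overline{N_\lambda}$. Decompose $\ker N_\lambda=\bigoplus_{i\geq 1}K_i$, where $K_i$ is the span of the socles of the size-$i$ Jordan blocks of $N_\lambda$, so $\dim K_i=r_i$. For $L=\k v$ with $v=\sum_i v_i$, $v_i\in K_i$, a direct computation on a Jordan basis shows that $\overline{N_\lambda}$ has Jordan type $\lambda^j$ exactly when $j$ is the largest index with $v_j\neq 0$. Hence the stratum of type $\lambda^j$ is
\[
U_j=\P(K_1\oplus\cdots\oplus K_j)\setminus \P(K_1\oplus\cdots\oplus K_{j-1}),
\]
of Euler characteristic $r_j$, and the restriction of $\pi$ to $U_j$ is a Zariski-locally trivial fibration with fiber $\B'_{N_{\lambda^j}}$. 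Additivity of Euler characteristics then already gives $\ec(\lambda)=\sum_i r_i\,\ec(\lambda^i)$, recovering the weak form Theorem~\ref{thm:main}(i).

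To upgrade the Euler-characteristic identity to a $W'$-character equality, I would interpret $\Res_{W'}^W$ geometrically via the maximal parabolic $P\supseteq B$ with Levi $GL_{n-1}\times GL_1$, whose Weyl group is $W'$. The Springer resolution factors through the partial resolution associated to $\B\to G/P\cong \P^{n-1}$, and Lusztig's construction of the $W$-action on $R\pi_*\qlbar$ is compatible with this factorization, so that the restricted $W'$-action on $H^*(\B_{N_\lambda})$ agrees with the fiberwise $W'$-Springer action on each $\B'_{\overline{N_\lambda}}$. Combining this with the stratification above (noting that the $W'$-action on each $H^*_c(U_j)$ is trivial, since $U_j$ is a difference of projective spaces) yields
\[
\Res_{W'}^W H^*(\B_{N_\lambda})\;\cong\;\bigoplus_{i\geq 1} r_i\cdot H^*(\B'_{N_{\lambda^i}})
\]
as $W'$-modules, whence the theorem on characters.

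The main obstacle is the last step: rigorously identifying the restricted Weyl-group action with the fiberwise Springer action through the parabolic factorization. In type $A$ one could alternatively invoke the classical Hotta--Springer identity $\spr(\lambda)\cong \Ind_{S_\lambda}^{S_n}(\mathbf{1})$ and apply Mackey's formula to $S_{n-1}\backslash S_n/S_\lambda$; its double cosets are indexed by parts of $\lambda$, and a Young-subgroup intersection computation shows that they contribute precisely $r_i$ copies of $\spr(\lambda^i)$. However, this shortcut does not readily extend to types BC and D (where the component group $A_\lambda$ must also be tracked), so the geometric approach outlined above is preferable as a template for the subsequent theorems.
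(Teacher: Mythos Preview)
Your approach is essentially the paper's: project $\B_{N_\lambda}$ to $\P(\ker N_\lambda)$, stratify by the Jordan type of the induced nilpotent on $V/L$, and identify the restricted $W'$-action with the fiberwise Springer action. The paper makes the last step precise via Hotta--Shimomura (Lemma~\ref{lem:hs}), which equips the Leray sheaves $R^j\pi_!\qlbar$ with $W'$-module structures matching the fiberwise Springer action and compatible with the restriction from $W$; your sketch of this compatibility is in the right direction but would need this reference (or an equivalent argument) to be complete.

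There is, however, a concrete error in your stratification. Take $\lambda=(2,1)$ with Jordan basis $e_1,e_2,f_1$ (so $Ne_2=e_1$, $Ne_1=Nf_1=0$), giving $K_1=\langle f_1\rangle$ and $K_2=\langle e_1\rangle$. For $v=e_1+f_1$ your rule (largest $j$ with $v_j\neq 0$) predicts Jordan type $\lambda^2=(1,1)$ on $V/\k v$; but in that quotient $e_1\equiv -f_1$, so $e_2\mapsto -f_1\mapsto 0$ is a single length-$2$ chain and the type is $\lambda^1=(2)$. The correct invariant is the \emph{smallest} such $j$ --- equivalently, the canonical filtration $W_i=\ker N\cap\im N^i$ used in the paper: one has type $\lambda^i$ on $V/\k v$ exactly when $v\in W_{i-1}\setminus W_i$. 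In your notation $W_{j-1}=K_j\oplus K_{j+1}\oplus\cdots$, so the correct strata are $\P(W_{j-1})\setminus\P(W_j)$, not $\P(K_1\oplus\cdots\oplus K_j)\setminus\P(K_1\oplus\cdots\oplus K_{j-1})$. By coincidence both families of strata have Euler characteristic $r_j$, so your numerical conclusion survives; but your claim that $\pi$ is locally trivial over $U_j$ with fiber $\B'_{N_{\lambda^j}}$ is false as stated, since the Jordan type is not constant on your $U_j$. Once the filtration is reversed, your argument coincides with the paper's.
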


\begin{rmk} This theorem can also be proved by a combinatorial method: see Appendix \ref{app:comA}.
\end{rmk}

\begin{proof} The method we adopt here using certain geometric properties of Springer fibers is well-known, cf. \cite{srini}, \cite{shimomura}, \cite{spaltenstein}, \cite{shoji}, \cite{leeuwen}, etc. But here we give a proof for the sake of completeness. 

Let $N = N_\lambda \in \g$ be a nilpotent element corresponding to $\lambda =(1^{r_1}2^{r_2} \cdots) \vdash n$ and $\cP$ be a variety of lines in $V$. Then we have a natural surjective morphism $\pi: \B \rightarrow \cP$ which sends $[0=V_0 \subset V_1 \subset \cdots \subset V_{n-1} \subset V_n =V]$ to $V_1$. This restricts to $\pi : \B_N \rightarrow \cP_N$ where $\cP_N$ is the set of lines annihilated by $N$. It is easy to show that $\pi : \B_N \rightarrow \cP_N$ is surjective.

Note that $\P(\ker N) = \cP_N$. We have filtration of $\ker N$
$$\ker N = W_0 \supset W_1 \supset W_2 \supset \cdots$$
where $W_i = \ker N \cap \im N^i$. Now suppose $W_{i-1}/W_{i}$ is nonzero. Then the action of $N$ on $W_{i-1}/W_{i}$ has Jordan blocks of size $i$, i.e. the restriction of $N$ onto $W_{i-1}/W_i$ corresponds to a (rectangular) partition $(i^{r_i})$. 

Let $\eta: \P(W_{i-1}) - \P(W_i) \rightarrow \P(W_{i-1}/W_i)$ be a canonical affine bundle with fiber $W_i$. We stratify $\P(W_{i-1}/W_i)$ into affine spaces, i.e. $\P(W_{i-1}/W_i) = \bigsqcup_{j=0}^{r_i-1} Y_j$ where $Y_j \simeq \A^j$. Also we let $\tilde{Y}_j \colonequals \eta^{-1}(Y_j)$ and $X_j \colonequals \pi^{-1}(\tilde{Y}_j) = (\eta\circ\pi)^{-1}(Y_j)$. Then we can choose this stratification so that $\pi : X_j \rightarrow \tilde{Y}_j$ is a locally trivial bundle with fiber $\B'_{N_{\lambda^i}}$, where $\lambda^i$ is defined in \ref{note:partition}. (ref. \cite{shimomura}) Thus we have
$$H^k_c(X_j) \simeq \bigoplus_{k_1+k_2=k}H^{k_1}_c(\tilde{Y}_j) \otimes H^{k_2}(\lambda^i) \simeq H^{k-2\dim W_i - 2j}(\lambda^i)$$
as a vector space. Indeed, more is true: first we recall \cite[Theorem 5.1]{hotta-shimomura}.
\begin{lem}\label{lem:hs} The Leray sheaves $R^j\pi_! \qlbar$ on $\cP_N$ have structures of $W'$-modules such that
\begin{enumerate}[(a)]
\item for any $x \in \cP_N$, $(R^j\pi_! \qlbar)_x \simeq H^j(\B'_{N'})$ as $W'$-modules where $H^j(\B'_{N'})$ is equipped with the Springer representation of $W'$ and where $N'$ is the image of $N$ under the canonical quotient morphism from the maximal parabolic subalgebra corresponding to $x$ (which contains $N$ by definition of $\cP_N$) to its Levi factor.
\item $H^k(\cP_N, R^j\pi_!\qlbar) \Rightarrow H^{j+k}(\B_N)$ is a spectral sequence of $W'$-modules, where the action of $W'$ on $H^{j+k}(\B_N)$ is the restriction of the Springer representation of $W$.
\end{enumerate}
\end{lem}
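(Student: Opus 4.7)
The plan is to prove Lemma \ref{lem:hs} via the standard parabolic Springer resolution, following the strategy of Hotta--Shimomura and Lusztig. The key is to realize $\pi : \B \to \cP$ as the restriction of a single global morphism that fits naturally into Springer theory, and then to use proper base change and Leray.

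First I would introduce the partial Springer variety
$$\tilde\g_P \colonequals \{(N,L) \in \g \times \cP \mid NL \subset L\},$$
together with the factorization $\tilde\g \xrightarrow{f} \tilde\g_P \xrightarrow{\mu_P} \g$ of the Springer resolution $\mu$. Over a point $N \in \g$, $\mu_P^{-1}(N) = \cP_N$ and $f^{-1}(0, L)$ is canonically identified with the flag variety of $V/L$, which gives the fiber description underlying (a). More importantly, $f$ is a small proper map whose restriction over the regular semisimple locus of $\tilde\g_P$ is an étale Galois cover with group $W'$ (this uses that $W'$ is precisely the Weyl group of the Levi of $P$). Consequently $R f_* \qlbar$ carries a natural $W'$-action, and by proper base change its restriction to $\cP$ along the zero section is naturally identified with $R\pi_* \qlbar$, giving the desired $W'$-module structure on $R^j\pi_!\qlbar$.

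For part (a), I would then compute the stalk at $x = [L] \in \cP_N$ by proper base change: $(R^j\pi_!\qlbar)_x \simeq H^j(\pi^{-1}(x)\cap \B_N)$, and identify $\pi^{-1}(x)\cap \B_N$ with $\B'_{N'}$, where $N' \in \g'$ is the image of $N$ in the Levi $\gl(V/L)$. The harder piece is to match the $W'$-action coming from $R f_* \qlbar$ with the Springer $W'$-action on $H^j(\B'_{N'})$. This is done by observing that both actions are constructed from the same datum, namely the Galois $W'$-cover obtained from $\tilde{\g'} \to \g'$ on the Levi factor of $P$, and then invoking the compatibility (e.g.\ via Lusztig's construction in \cite{lu:green} or direct comparison of monodromy on regular semisimple loci) between the Springer sheaf on $\g'$ and the restriction of $R f_* \qlbar$ along the Levi slice through $L$.

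For part (b), the Leray spectral sequence for $\pi : \B_N \to \cP_N$ is formal; what requires argument is that it is $W'$-equivariant and that the $W'$-action on the abutment is the restriction of the Springer $W$-action on $H^{j+k}(\B_N)$. Functoriality of the Leray spectral sequence gives equivariance with respect to the action on $R^j \pi_! \qlbar$ constructed above. The identification on the abutment comes from the factorization $R\mu_* \qlbar = R(\mu_P)_* R f_* \qlbar$: the $W$-action on $R\mu_* \qlbar$ restricts along $W' \hookrightarrow W$ to the $W'$-action coming from $R f_* \qlbar$, because both restrict to the tautological $W'$-action on the Galois cover over the regular semisimple locus, and this identification extends uniquely (by the decomposition theorem / intermediate extension) to all of $\g$.

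I expect the main obstacle to be Step (a), i.e., checking that the sheaf-theoretic $W'$-action produced globally from $R f_* \qlbar$ agrees on stalks with the Springer representation of $W'$ on $H^j(\B'_{N'})$. Everything else (the fiber identification, proper base change, the Leray spectral sequence) is routine, but the matching of the two $W'$-actions really is the content of the lemma, and it is where one pays the price of Springer theory — one must trace the $W'$-action back to monodromy on a common regular semisimple locus and verify it is the standard one, which is the nontrivial input provided by \cite{hotta-shimomura} and \cite{lu:green}.
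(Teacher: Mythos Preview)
The paper does not prove this lemma at all: it is introduced with the phrase ``first we recall \cite[Theorem 5.1]{hotta-shimomura}'' and is simply quoted as a black box. So there is no ``paper's own proof'' to compare against; your proposal is a sketch of the argument that the cited reference supplies.

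That said, your outline is a faithful summary of the Hotta--Shimomura/Lusztig approach: factor the Springer map through the partial resolution $\tilde\g \xrightarrow{f} \tilde\g_P \to \g$, put the $W'$-action on $Rf_*\qlbar$ via the Galois cover over the regular semisimple locus, and use proper base change for the stalk computation and the factorization $R\mu_*\qlbar = R(\mu_P)_* Rf_*\qlbar$ together with uniqueness of the intermediate extension for the compatibility on the abutment. You have also correctly located the only genuinely nontrivial step, namely matching the globally defined $W'$-action on stalks with the Springer $W'$-action on $H^j(\B'_{N'})$; this is exactly what \cite{hotta-shimomura} establishes, and it is not something one can avoid by formal manipulations. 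If you wanted to turn this into a self-contained proof you would have to carry out that monodromy comparison carefully, but for the purposes of this paper a citation is entirely appropriate.
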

By the first part of Lemma \ref{lem:hs} we have the following result.
\begin{lem} \label{lem:WisoA}There exists a natural $W'$-module structure on $H^k_c(X_j)$ such that
$$H^k_c(X_j) \simeq H^{k-2\dim W_i - 2j}(\lambda^i)$$
as $W'$-modules. Here the action of $W'$ on $H^{k-2\dim W_i - 2j}(\lambda^i)$ is given by the Springer representation corresponding to $G'$.
\end{lem}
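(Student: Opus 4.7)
The plan is to exploit the Hotta--Shimomura description (Lemma~\ref{lem:hs}) of the Leray sheaves $R^q\pi_!\qlbar$ on $\cP_N$, restrict them to the stratum $\tilde{Y}_j$, and then read off $H^*_c(X_j)$ from the Leray spectral sequence of the proper map $\pi\colon X_j \to \tilde{Y}_j$. Properness holds because this map is obtained from $\pi\colon \B_N \to \cP_N$ (proper, since $\B_N$ is projective) by base change along the open inclusion $\tilde{Y}_j \hookrightarrow \cP_N$.

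First I would observe that $\tilde{Y}_j$, being an affine bundle with vector-space fiber $W_i$ over the affine space $Y_j \simeq \A^j$, is itself isomorphic to an affine space $\A^{\dim W_i + j}$ (any affine bundle over an affine base is trivial). Consequently $\tilde{Y}_j$ is simply connected in the \'etale sense, and $H^p_c(\tilde{Y}_j, \qlbar) = \qlbar$ for $p = 2\dim W_i + 2j$ while vanishing otherwise. Since $\pi\colon X_j \to \tilde{Y}_j$ is a locally trivial fibration with fiber $\B'_{N_{\lambda^i}}$, the restriction $R^q\pi_!\qlbar|_{\tilde{Y}_j}$ is a $\qlbar$-local system, which by simple-connectedness must be the constant sheaf whose stalk $H^q(\lambda^i)$ carries the Springer $W'$-action supplied by Lemma~\ref{lem:hs}(a).

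Finally I would run the Leray spectral sequence of $W'$-modules
\begin{equation*}
E_2^{p,q} = H^p_c(\tilde{Y}_j, R^q\pi_!\qlbar) = H^p_c(\tilde{Y}_j) \otimes H^q(\lambda^i) \Rightarrow H^{p+q}_c(X_j),
\end{equation*}
with $W'$ acting trivially on the base factor. Only the column $p = 2\dim W_i + 2j$ survives, so the sequence degenerates at $E_2$ and gives $H^k_c(X_j) \simeq H^{k-2\dim W_i-2j}(\lambda^i)$ as $W'$-modules. The main subtlety is to check that the global $W'$-action of Hotta--Shimomura, restricted to the stratum and then propagated through the degenerate spectral sequence, matches the Springer $W'$-action on $H^*(\lambda^i)$; but this is exactly what Lemma~\ref{lem:hs}(a) is designed to say, so the identification is essentially automatic.
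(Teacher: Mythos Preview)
Your argument is correct and follows the same route the paper takes: the paper establishes the vector-space isomorphism via the locally trivial fibration $\pi\colon X_j\to\tilde{Y}_j$ and the fact that $\tilde{Y}_j$ is an affine space, and then invokes Lemma~\ref{lem:hs}(a) to upgrade this to a $W'$-module isomorphism; you simply spell out the Leray spectral sequence and the simple-connectedness argument that the paper leaves implicit. One small slip: $\tilde{Y}_j\hookrightarrow\cP_N$ is locally closed rather than open, but properness is stable under arbitrary base change, so your justification that $\pi|_{X_j}$ is proper goes through unchanged.
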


Now we consider the long exact sequences of the cohomology with compact support of $\B_N$ corresponding to the stratification by such $X_j$'s. By Lemma \ref{lem:WisoA}, these long exact sequences are defined in the category of $W'$-modules. Thus it follows that $H^*(\B_N)$ is the sum of such $H^*_c(X_j)$'s. Furthermore, the second part of Lemma \ref{lem:hs} and its proof implies that the $W'$-module structure on $H^*(\B_N)$, as a sum of such $H^*_c(X_j)$, coincides with the restriction to $W'$ of the Springer representation of $W$ on $H^*(\B_N)$. 

In sum, we have
$$\Res_{W'}^W \spr(\lambda) = \sum_{r_i\geq 1} r_i \spr(\lambda^i)$$
as characters of $W'$. But this is what we want to prove.
\end{proof}

Indeed, we may proceed further; since Springer fibers have vanishing odd cohomology by \cite{c-l-p}, it is also true for $X_j$. Thus each long exact sequence considered above splits into short exact sequences in even degrees. Therefore, by keeping track of degrees of each short exact sequence, we have the following theorem which generalizes Theorem \ref{thm:mainA}.

\begin{thm}\label{thm:bettiA} For $\lambda = (\lambda_1, \lambda_2, \cdots, \lambda_r)$ and any $k \in \Z$, we have
$$\Res_{W'}^W H^k(\lambda) = \bigoplus_{i=1}^r H^{k-2i+2}(\lambda^{\lambda_i})$$
as $W'$-modules. (See \ref{note:partition} for the definition of $\lambda^{\lambda_i}$.)
\end{thm}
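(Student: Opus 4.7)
The plan is to upgrade the argument for Theorem~\ref{thm:mainA} by tracking cohomological degrees through the stratification. All the geometric input is already in place; what remains is to note that every long exact sequence attached to the stratification $\B_N = \bigsqcup_{(i,j)} X_j$ collapses into short exact sequences, and then to reindex the resulting decomposition by row number of the Young diagram.

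First I would invoke \cite{c-l-p}: every Springer fiber for $GL_n$ has vanishing odd cohomology. Since $\pi : X_j \to \tilde{Y}_j$ is a locally trivial bundle over the affine space $\tilde{Y}_j$ of dimension $\dim W_i + j$ whose fiber is the Springer fiber $\B'_{N_{\lambda^i}}$, the Leray spectral sequence together with the fact that $H^k_c(\tilde{Y}_j, \qlbar)$ vanishes except in degree $2(\dim W_i + j)$ gives $H^k_c(X_j) = 0$ whenever $k$ is odd. Since $\B_N$ itself has the same vanishing property, each connecting map in the long exact sequences attached to the open-closed stratifications of $\B_N$ lands in an odd-degree group and therefore vanishes. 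This upgrades the equality of Euler characteristics in Theorem~\ref{thm:mainA} to a genuine direct sum decomposition of $W'$-modules in each degree:
$$H^k(\B_N) \;\simeq\; \bigoplus_{i :\, r_i \geq 1}\ \bigoplus_{j=0}^{r_i - 1} H^{k - 2\dim W_i - 2j}(\lambda^i),$$
with the $W'$-action identified via Lemma~\ref{lem:WisoA}.

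Next I would reindex by row number. Fix a part size $i$ with $r_i \geq 1$; the rows of length exactly $i$ in the Young diagram of $\lambda$ are those with index $\ell = a, a+1, \dots, a + r_i - 1$ where $a - 1 = \sum_{i' > i} r_{i'}$. A direct count gives
$$\dim W_i \;=\; \dim(\ker N \cap \im N^i) \;=\; \#\{\ell : \lambda_\ell \geq i + 1\} \;=\; \sum_{i' > i} r_{i'} \;=\; a - 1,$$
so the set of shifts $\{2\dim W_i + 2j : 0 \leq j \leq r_i - 1\}$ equals $\{2(\ell-1) : \lambda_\ell = i\}$. Since every such row yields the same reduced partition $\lambda^{\lambda_\ell} = \lambda^i$, reindexing the double sum by $\ell$ produces exactly $\bigoplus_{\ell=1}^r H^{k - 2\ell + 2}(\lambda^{\lambda_\ell})$, which is the claim.

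The only step I expect to require any genuine care is the odd-vanishing of $H^*_c(X_j)$, which reduces via a small Leray spectral sequence computation to \cite{c-l-p}; everything else is forced once the stratification and Lemma~\ref{lem:WisoA} of the proof of Theorem~\ref{thm:mainA} are in hand.
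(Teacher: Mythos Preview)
Your proposal is correct and follows essentially the same route as the paper: invoke the odd-vanishing of \cite{c-l-p} to see that each $X_j$ has $H^{\textup{odd}}_c = 0$, conclude that the long exact sequences of the stratification split into short exact sequences of $W'$-modules, and then reindex by row number. The paper states this argument tersely in the paragraph preceding Theorem~\ref{thm:bettiA}; your version simply spells out the Leray computation for $H^*_c(X_j)$ and the bookkeeping identity $\dim W_i = \sum_{i'>i} r_{i'}$ that converts the $(i,j)$-indexing into the row index $\ell$.
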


\begin{rmk} If we evaluate the character of each side at $id \in W'$, then it gives \cite[Proposition 4.5]{fresse}. His method is combinatorial, counting the number of ``row-standard" tableaux of certain Young diagrams.
\end{rmk}

\begin{example} Suppose $\k$ is an algebraic closure of a finite field. For $w\in W$, we define $Q_{\lambda, w}(q) = \sum_{k \in \N} q^k\tr(w, H^{2k}(\lambda))$ to be the Green function associated with $w$ and the nilpotent element $N_{\lambda}\in\g$ corresponding to $\lambda$. Then Theorem \ref{thm:bettiA} implies that for $w\in W' \subset W$ we have
$$Q_{\lambda, w}(q) = \sum_{i=1}^{r}Q_{\lambda^{\lambda_i}, w}(q)q^{i-1}$$
where $Q_{\lambda^{\lambda_i}, w}$ is defined similarly to $Q_{\lambda, w}$. For example, if $G=GL_4$, $w = (1 2 3) \in W' \subset W$, then
\begin{align*}
Q_{(1,1,1,1),w}(q) &= (q^3+q^2+q+1)Q_{(1,1,1),w}(q), 
\\Q_{(2,1,1),w}(q) &= Q_{(1,1,1),w}(q) + (q^2+q)Q_{(2,1),w}(q),
\\Q_{(2,2),w}(q) &=(q+1)Q_{(2,1),w}(q),
\\Q_{(3,1),w}(q) &=Q_{(2,1),w}(q)+qQ_{(3),w},
\\Q_{(4),w}(q) &=Q_{(3),w}(q),
\end{align*}
which are true as
\begin{align*}
&Q_{(1,1,1,1),w}(q) =q^6 - q^4 - q^2 + 1, &&Q_{(2,1,1),w}(q)= Q_{(2,2),w}(q)=-q^2 + 1, 
\\&Q_{(3,1),w}(q)=Q_{(4),w}(q)=1, && Q_{(1,1,1),w}(q) =q^3 - q^2 - q + 1,
\\&Q_{(2,1),w}(q)=-q+1, &&Q_{(3),w}(q)=1. 
\end{align*}
\end{example}

\section{Type $B, C$ and $D$} \label{sec:BCD}
Now we assume that $G=SO_{2n+1}, Sp_{2n},$ or $SO_{2n}$. We follow the argument in the previous section with some modifications. Let $N = N_\lambda \in \g$ be a nilpotent element corresponding to $\lambda=(1^{r_1}2^{r_2} \cdots)$ and $\cP$ be a variety of isotropic lines in $V$. Then we have a natural surjective morphism $\pi: \B \rightarrow \cP$ defined by 
\begin{alignat*}{3}
&\textup{if } G=SO_{2n+1}, Sp_{2n}, \qquad &&[0=V_0 \subset V_1 \subset \cdots \subset V_{n-1} \subset V_n] &&\mapsto V_1, \textup{ and}
\\&\textup{if } G=SO_{2n}, \qquad &&[0=V_0 \subset V_1 \subset \cdots \subset V_{n-1}] &&\mapsto V_1.
\end{alignat*}
This restricts to $\pi : \B_N \rightarrow \cP_N$ where $\cP_N$ is the set of isotropic lines annihilated by $N$. It is easy to show that $\pi : \B_N \rightarrow \cP_N$ is surjective.

\cite{srini} defined a stratification on $\cP_N$ such that $\pi$ is locally trivial on each stratum, which is revisited in \cite{shoji2} and \cite{shoji}. We recall their results as follows. Start with a filtration
$$\ker N = W_0 \supset W_1 \supset W_2 \supset \cdots$$
where $W_i = \ker N \cap \im N^i$. Now suppose $W_{i-1}/W_{i}$ is nonzero. Then the action of $N$ on $W_{i-1}/W_{i}$ has Jordan blocks of size $i$, i.e. the restriction of $N$ onto $W_{i-1}/W_i$ corresponds to a (rectangular) partition $(i^{r_i})$.

\cite{srini} also defined a (non-degenerate) bilinear form $\brr{\ , \ }$ on $W_{i-1}/W_i$ which is symmetric if $i$ is odd (resp. even) and $G=SO_{2n+1}, SO_{2n}$ (resp. $G=Sp_{2n}$). Otherwise it is symplectic. If $\brr{\ ,\ }$ is symmetric, then the set of isotropic lines in $\P(W_{i-1}/W_i)$ forms a quadric hypersurface, which is nonsingular if $\dim W_{i-1}/W_i \geq 3$, a union of two points if $\dim W_{i_1}/ W_i = 2$, and empty if $\dim W_{i_1}/W_i=1$. If $\brr{\ , \ }$ is symplectic, then any $x \in \P(W_{i-1}/W_i)$ is isotropic.

There is a canonical affine bundle $\eta: \P(W_{i-1}) - \P(W_i) \rightarrow \P(W_{i-1}/W_{i})$ with fiber isomorphic to $W_i$. Now we define $Y$ or $Y_j$ to be one of the strata of $\P(W_{i-1}/W_i)$ in each case below, following argument in \cite{shoji}, and let $\tilde{Y} \colonequals \eta^{-1}(Y)$, $\tilde{Y}_j \colonequals \eta^{-1}(Y_j)$. (See \ref{note:partition} for the definition of $\lambda^{h,i}$ and $\lambda^{v,i}$.)
\begin{enumerate}[\textup{Case} I.]
\item Suppose $\brr{\ , \ }$ is symmetric. Let $Q$ be the set of isotropic lines with respect to $\brr{\ , \ }$ in $\P(W_{i-1}/W_i)$ and $C \colonequals \P(W_{i-1}/W_i) -Q$. There is a stratification
\begin{gather*}
Q= Q_0 \supset Q_1 \supset \cdots \supset Q_{m-1} \supset Q_{m+1} \supset \cdots \supset Q_{r_i-1} \supset Q_{r_i} = \emptyset
\\C= C_0 \supset C_1 \supset \cdots \supset C_{r_i-m-1} \supset C_{r_i-m} = \emptyset
\end{gather*}
defined in \cite{srini} or \cite{shoji}, where $m = \floor{\frac{r_i}{2}}$.
\begin{enumerate}
\item[$(a_1)$] $Y_j=Q_{j-1} - Q_{j}$ for $j \neq m, m+1$, or $Y_m = Q_{m-1} - Q_{m+1}$ when $r_i$ is odd. Then the fiber of $\pi$ at any point in $\tilde{Y}_j$ is isomorphic to $\B'_{N_{\lambda^{v,i}}}$. Also we have
$$Y_j \simeq \A^{r_i-j-1}\quad \textup{ if } 1 \leq j \leq m, \qquad Y_j \simeq \A^{r_i-j} \quad \textup{ if } m+2\leq j \leq r_i.$$
\item[$(a_2)$] $Y=Q_{m-1}-Q_{m+1}$ when $r_i$ is even. Then the fiber of $\pi$ at any point in $\tilde{Y}$ is isomorphic to $\B'_{N_{\lambda^{v,i}}}$. Also $Y \simeq \A^{r_i-m-1} \sqcup \A^{r_i-m-1} = \A^{m-1} \sqcup \A^{m-1}$.
\item[$(b_1)$] $Y_j=C_{j-1} - C_{j}$ for $j \neq m+1$. Then the fiber of $\pi$ at any point in $\tilde{Y}_j$ is isomorphic to $\B'_{N_{\lambda^{h,i}}}$. Also we have $Y_j \simeq \A^{r_i-j}-\A^{r_i-j-1}$.
\item[$(b_2)$] $Y=C_{r_i-m-1}=C_m$ when $r_i$ is odd. Then the fiber of $\pi$ at any point in $\tilde{Y}$ is isomorphic to $\B'_{N_{\lambda^{h,i}}}$. Also $Y \simeq \A^m$.
\end{enumerate}
\item Suppose $\brr{\ , \ }$ is symplectic and stratify $\P(W_{i-1}/W_i)$ with respect to some symplectic basis, say 
$$\P(W_{i-1}/W_i) = Z_0 \supset Z_1 \supset \cdots \supset Z_{r_i-1} \supset Z_{r_i}=\emptyset.$$
Let $Y_j = Z_{j-1}-Z_j$ be one of the strata. Then the fiber of $\pi$ at any point in $\tilde{Y}_j$ is isomorphic to $\B'_{N_{\lambda^{v,i}}}$. Also $Y_j \simeq \A^{r_i-j}$.
\end{enumerate}

Let $X\colonequals \pi^{-1}(\tilde{Y}) = (\eta\circ \pi)^{-1}(Y)$ and $X_j \colonequals\pi^{-1}(\tilde{Y}_j) =  (\eta\circ \pi)^{-1}(Y_j)$. Then we have the following theorem. (Here we do not differentiate $N_{\lambda+}$ and $N_{\lambda-}$ even when $G=SO_{2n}$ and $\lambda$ is very even, as they make no difference in the following statement. It is similar when $\lambda^{v,i}$ is very even. $\lambda^{h,i}$ cannot be very even in any case.) Note that $\lambda^{h,i}$ and $\lambda^{v,i}$ correspond to some nilpotent elements in $\g'$.

\begin{lem} \label{lem:ANiso} Let $z_a$ (resp. $z'_a$) be the generators of $\tA_{\lambda}$ (resp. $\tA_{\lambda^{h,i}}$ or $\tA_{\lambda^{v,i}}$) following the notations of \ref{note:cent}. Then we have following isomorphisms of $\tA_{\lambda}$-modules. 
\begin{enumerate}
\item[\textup{Case I}.]
\begin{enumerate}
\item[$(a_1)$] $H^k_c(X_j) \simeq H^{k-2\dim W_i - 2\dim Y_j}(\lambda^{v,i}).$
\item[$(a_2)$] $H^k_c(X) \simeq H^{k-2\dim W_i - 2\dim Y}(\lambda^{v,i}) \oplus H^{k-2\dim W_i - 2\dim Y}(\lambda^{v,i}).$
\item[$(b_1)$] If $k$ is even, $H^k_c(X_j) \simeq H^{k-2\dim W_i - 2\dim Y_j}(\lambda^{h,i})^\tau$.

If $k$ is odd,  $H^k_c(X_j) \simeq H^{k-2\dim W_i - 2\dim Y_j+1}(\lambda^{h,i})^\tau.$
\item[$(b_2)$] $H^k_c(X) \simeq H^{k-2\dim W_i - 2\dim Y}(\lambda^{h,i}).$
\end{enumerate}
\item[\textup{Case II}.] $H^k_c(X_j) \simeq H^{k-2\dim W_i - 2\dim Y_j}(\lambda^{v,i}).$
\end{enumerate}
Here $\tau=z_i' z_{i-2}' \in \tA_{\lambda^{h,i}}$. The action of $z_a \in \tA_{\lambda}$ on the right hand side is defined as follows. 
\begin{enumerate}
\item[\textup{Case I}.]
\begin{enumerate}
\item[$(a_1)$] $z_a$ acts by $z'_a$.
\item[$(a_2)$] $z_a$ acts by $z'_a$ for $a \neq i$, and $z_i$ permutes two summands.
\item[$(b_1)$] $z_a$ acts by $z'_a \mod \tau$ unless $r_i$ is even, $j=r_i/2$, $a=i$, and $k$ is odd, in which case $z_i$ acts by $-z'_i \mod \tau$, i.e. $v \mapsto -z'_i(v)$. (Note that the action of $z_i'$ and $z_{i-2}'$ on the right hand side are the same.)
\item[$(b_2)$] $z_a$ acts by $z'_a$ for $a \neq i$, and $z_i$ acts by $z'_{i-2}$.
\end{enumerate}
\item[\textup{Case II}.] $z_a$ acts by $z'_a$.
\end{enumerate}
\end{lem}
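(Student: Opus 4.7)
The approach mirrors that of Lemma \ref{lem:WisoA}: use the local triviality of $\pi\colon X_j\to\tilde{Y}_j$ with Springer-fiber fibers, together with an explicit description of the base, to compute $H^*_c(X_j)$ via a Leray (or Künneth, after local trivialization) decomposition; then upgrade the computation to track the action of $\tA_\lambda$. The new features in types $B,C,D$ are that (i) the base stratum is no longer always an affine space and (ii) $\tA_\lambda$ acts non-trivially on the Jordan subquotient $W_{i-1}/W_i$, so the generator $z_i$ can either permute strata or twist the cohomology by a sign.

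First I would carry out the non-equivariant cohomology computation. Because $\eta\colon\tilde{Y}_j\to Y_j$ is an affine bundle of rank $\dim W_i$, one has $H^*_c(\tilde{Y}_j)\simeq H^{*-2\dim W_i}_c(Y_j)$. In cases $(a_1)$, $(b_2)$, and Case II, $Y_j$ is an affine space, so its compactly supported cohomology is $\qlbar$ in a single top even degree; in case $(a_2)$, $Y$ is a disjoint union of two affine spaces, contributing two copies of such a class; in case $(b_1)$, $Y_j\simeq\A^{r_i-j-1}\times\G_m$ is a hyperplane complement, so $H^*_c(Y_j)$ is $\qlbar$ in both the top even degree and in the degree one below. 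Combined with the vanishing of odd cohomology for Springer fibers of classical type (the analogue of \cite{c-l-p}), the Leray spectral sequence for $\pi|_{X_j}$ degenerates at $E_2$ and produces exactly the shifts claimed in the statement.

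Next I would track the $\tA_\lambda$-action. The group $\tA_\lambda$ lifts into the stabilizer of $N$ in $\tilde G$ and hence preserves $\ker N$, the filtration $W_\bullet$, and the induced form $\brr{\ ,\ }$ on each $W_{i-1}/W_i$. A generator $z_a$ with $a\neq i$ acts trivially on $W_{i-1}/W_i$, so it fixes each base stratum pointwise and descends on fibers to $z'_a\in\tA_{\lambda^{v,i}}$ or $\tA_{\lambda^{h,i}}$; in the $(b)$-cases this is only well-defined modulo $\tau=z'_iz'_{i-2}$, which is the reason the lemma passes to that quotient. The generator $z_i$ acts on $W_{i-1}/W_i$ by a non-trivial isometry, and tracing this involution through the Srinivasan--Shoji stratification gives: in $(a_2)$ it swaps the two components of $Q_{m-1}-Q_{m+1}$; in $(b_2)$ it merges the $i$- and $(i-2)$-Jordan subspaces produced by removing a horizontal domino, inducing $z'_{i-2}$ on the fiber; and in $(b_1)$ with $r_i$ even and $j=r_i/2$ it restricts to the inversion map $x\mapsto x^{-1}$ on the $\G_m$-factor of $Y_{r_i/2}$, imprinting a minus sign on the $H^1_c(\G_m)$-contribution to the odd-degree part of $H^*_c(X_j)$.

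The step I expect to be the main obstacle is precisely the last identification, especially the sign in $(b_1)$ at $j=r_i/2$. To pin it down I would fix an explicit representative of $z_i$ in the centralizer of $N$ inside $\tilde G$, compute its matrix on a symplectic or orthogonal basis adapted to the Srinivasan filtration of $\ker N$, and verify that at the symmetric locus $j=r_i/2$ it acts by inversion on the natural $\G_m$-coordinate of the hyperplane-complement stratum. Everything else --- functoriality of Leray, the component swap in $(a_2)$, and the relation $z_i\leftrightarrow z'_{i-2}$ in $(b_2)$ --- is geometric bookkeeping once that local model is fixed, since the strata $Y_j,\tilde Y_j,X_j$ are manifestly $\tA_\lambda$-equivariantly defined.
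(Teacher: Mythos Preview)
Your proposal is correct and matches the paper's approach. The paper's own proof is essentially a citation to \cite[Proposition 2.4]{shoji} (and \cite[Lemma 3.3.1]{leeuwen}) together with the observation that Shoji's description of the $\tA_\lambda$-action in Case~I.$(b_1)$ misses a sign, arising because $z_i$ acts on the relevant stratum $Y_j$ as the reciprocal map $z\mapsto 1/z$ on $\A^1-\{0\}$; your sketch reconstructs exactly this argument and correctly isolates the same source of the sign.
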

\begin{proof} This is exactly \cite[Proposition 2.4]{shoji} with some correction. (See also \cite[Lemma 3.3.1]{leeuwen}.) 
His formula has an error on the description of the action of $\tA_{\lambda}$ in Case I.$(b_1)$; it differs by a sign from our description in some special case. This sign comes from that the action of $z_i$ on $Y_j$ induces -1 on $H^{2\dim Y_j-1}_c(Y_j)$, which is equivalent to the reciprocal map $z\mapsto 1/z$ on $\A^1 -\{0\}$.
\end{proof}

Indeed, we have a similar result of Lemma \ref{lem:ANiso} for $W'$-modules, using argument in the previous section (mainly based on \cite[Theorem 5.1]{hotta-shimomura} and its proof).
Thus for any $Z \subset \cP_N$, $H^i(\pi^{-1}(Z))$ has a natural $W'$-module structure which comes from the Springer representations corresponding to $G'$. Now together with Lemma \ref{lem:ANiso} we have the following. (As before we do not differentiate $N_{\lambda+}$ and $N_{\lambda-}$ even when $G=SO_{2n}$ and $\lambda$ is very even. However, we need to be careful when $\lambda^{v,i}$ is very even.)
\begin{prop} \label{prop:Wiso} There is a natural $W'$-module structure on $H^k_c(X)$ and $H^k_c(X_j)$,  such that we have the following isomorphisms of $W' \times A_\lambda$-modules. 
\begin{enumerate}
\item[\textup{Case I}.]
\begin{enumerate}
\item[$(a_1)$] $H^k_c(X_j) \simeq H^{k-2\dim W_i - 2\dim Y_j}(\lambda^{v,i}).$
\item[$(a_2)$] If $G\neq SO_{2n}$ or $\lambda^{v,i}$ is not very even, then 
$$H^k_c(X) \simeq H^{k-2\dim W_i - 2\dim Y}(\lambda^{v,i}) \oplus H^{k-2\dim W_i - 2\dim Y}(\lambda^{v,i}).$$

If $G=SO_{2n}$ and $\lambda^{v,i}$ is very even, then 
$$H^k_c(X) \simeq H^{k-2\dim W_i - 2\dim Y}(\lambda^{v,i}+) \oplus H^{k-2\dim W_i - 2\dim Y}(\lambda^{v,i}-).$$
\item[$(b_1)$]  If $k$ is even, $H^k_c(X_j) \simeq H^{k-2\dim W_i - 2\dim Y_j}(\lambda^{h,i})^\tau$.

If $k$ is odd,  $H^k_c(X_j) \simeq H^{k-2\dim W_i - 2\dim Y_j+1}(\lambda^{h,i})^\tau.$
\item[$(b_2)$] $H^k_c(X) \simeq H^{k-2\dim W_i - 2\dim Y}(\lambda^{h,i}).$
\end{enumerate}
\item[\textup{Case II}.] $H^k_c(X_j) \simeq H^{k-2\dim W_i - 2\dim Y_j}(\lambda^{v,i}).$
\end{enumerate}
Here $\tau=z_i' z_{i-2}' \in \tA_{\lambda^{h,i}}$. The action of $A_\lambda$ is the restriction of that of $\tA_\lambda$ described in Lemma \ref{lem:ANiso}.
\end{prop}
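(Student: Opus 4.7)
The plan is to promote the $\tA_\lambda$-module isomorphisms of Lemma \ref{lem:ANiso} to $W' \times A_\lambda$-module isomorphisms by equipping both sides with a compatible $W'$-action via the Hotta--Shimomura sheaves of Lemma \ref{lem:hs}, exactly mimicking the Type $A$ argument of Theorem \ref{thm:mainA}. Since the $W$-action and the $A_N$-action on $H^*(\B_N)$ commute by \cite[6.1]{spr}, it suffices to (i) put a natural $W'$-structure on $H^k_c(X)$ and $H^k_c(X_j)$, and (ii) verify that the isomorphisms produced by Shoji's geometric construction are already $W'$-equivariant; the $A_\lambda$-equivariance is then inherited from Lemma \ref{lem:ANiso}.

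First I would apply Lemma \ref{lem:hs} to $\pi: \B_N \to \cP_N$ to obtain Leray sheaves $R^j\pi_!\qlbar$ on $\cP_N$ that restrict to $W'$-equivariant local systems with constant stalks $H^j(\B'_{N'})$ on each stratum $\tilde{Y}$ or $\tilde{Y}_j$, using that $\pi$ is locally trivial over that stratum with fiber the relevant Springer fiber $\B'_{N_{\lambda^{v,i}}}$ or $\B'_{N_{\lambda^{h,i}}}$. The long exact sequences of compactly supported cohomology associated to the stratification of $\cP_N$ are thereby sequences of $W'$-modules, and together with the spectral sequence of Lemma \ref{lem:hs}(b) they endow $H^*(\B_N)$ with a $W'$-action that agrees with the restriction of the Springer $W$-action.

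Next, for each stratum I would compute $H^k_c(X_j)$ (or $H^k_c(X)$) via the Leray spectral sequence for $\pi|_{X_j}$, combined with the cohomology of $\tilde{Y}_j$ (an affine bundle over $Y_j$, which is an affine space, a quasi-affine complement $\A^{r_i-j} - \A^{r_i-j-1}$ in case $(b_1)$, or a disjoint union of two affine spaces in case $(a_2)$). The vanishing of odd cohomology of Springer fibers \cite{c-l-p} forces degeneration at $E_2$, yielding the claimed $W'$-equivariant isomorphisms. The $\tau$-fixed subspace and the sign in case $(b_1)$ arise from the nontrivial $\Z/2$-action on $H^*_c(\A^{r_i-j} - \A^{r_i-j-1})$ induced by the involution $z_i$ via the reciprocal map on $\A^1 - \{0\}$, reproducing the corrected identification already encoded in Lemma \ref{lem:ANiso}. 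For case $(a_2)$ with $G = SO_{2n}$ and $\lambda^{v,i}$ very even, I would argue that the two components of $Y = \A^{m-1} \sqcup \A^{m-1}$ correspond respectively to the two distinct nilpotent $G'$-orbits $N_{\lambda^{v,i}+}$ and $N_{\lambda^{v,i}-}$, by identifying, for a point of each component, which adjoint $SO_{2n-2}$-orbit the image of $N$ in the Levi quotient of the stabilizing parabolic subalgebra belongs to.

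The main obstacle I anticipate is precisely this last identification in case $(a_2)$: although the decomposition $Y = \A^{m-1} \sqcup \A^{m-1}$ is geometrically transparent, checking that the two connected components parametrize Springer fibers for the two distinguished orbits (rather than two copies of the same orbit, as would be the case for the disconnected group $O_{2n}$ acting on the construction) requires a careful local analysis of the induced nilpotent on the Levi quotient. Once this point is settled, the remainder is a direct $W'$-equivariant refinement of the arguments underlying Lemma \ref{lem:ANiso}, and the assembly into a $W' \times A_\lambda$-module isomorphism is automatic from the commutation of the two actions.
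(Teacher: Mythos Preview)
Your approach is essentially the paper's: lift Lemma~\ref{lem:ANiso} to $W'\times A_\lambda$-modules via the Hotta--Shimomura sheaves of Lemma~\ref{lem:hs}, parallel to the Type~$A$ argument. One small correction: the commutation of the $W'$- and $A_\lambda$-actions should be justified on the $G'$-side rather than by invoking $W\times A_N$-commutation on $H^*(\B_N)$. Since $W'$ commutes with $A_{\lambda^{h,i}}$ (resp.\ $A_{\lambda^{v,i}}$) on $H^*(\B'_{N'})$ by \cite[6.1]{spr}, and the $A_\lambda$-action on the right-hand side is defined through these smaller component groups, the right-hand side is automatically a $W'\times A_\lambda$-module; this is also what guarantees that $H^k(\lambda^{h,i})^\tau$ is a $W'$-submodule, as $\tau\in A_{\lambda^{h,i}}$. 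Your invocation of commutation on $H^*(\B_N)$ does not directly give commutation on the individual pieces $H^k_c(X_j)$. Finally, your explicit treatment of Case~I$(a_2)$ when $\lambda^{v,i}$ is very even is more careful than the paper's (which just notes $A_\lambda=\{*\}$ and calls the statement ``clear''); the identification you propose is exactly right and follows because $z_i\in\tA_\lambda\setminus A_\lambda$, arising from an element of $O_{2n}$ of determinant $-1$, simultaneously swaps the two components of $Y$ and the two very-even $G'$-orbits.
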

\begin{proof} It can be proved similarly to Lemma \ref{lem:ANiso}, using Lemma \ref{lem:hs}. Also this is similar to \cite[Proposition 2.8]{shoji2} or \cite[Lemma 3.4]{shoji}, which only deal with cohomology on the top degree; it can be easily generalized to any degree. Note that the actions of $W'$ and $A_\lambda$ commute since the actions of $W'$ and $A_{\lambda^{h,i}}$ or $A_{\lambda^{v,i}}$ on the cohomology of Springer fibers corresponding to $G'$ commute by \cite[6.1]{spr}. (Note that $\tau \in A_{\lambda^{h,i}}$, thus $H^{k}(\lambda^{h,i})^\tau$ are still $W'$-modules. Also in Case I.$(a_2)$, if $\lambda^{v,i}$ is very even, then $A_{\lambda} = \{*\}$. Thus the statement is still clear in this case.)
\end{proof}

Now we consider the long exact sequences of the cohomology with compact support of $\B_N$ corresponding to the stratification by $X$ and $X_j$. By Proposition \ref{prop:Wiso}, these long exact sequences are defined in the category of $W'\times A_N$-modules. Thus it follows that $H^*(\B_N)$ is isomorphic to the sum of such $H^*_c(X)$ and $H^*_c(X_j)$ as a $W'\times A_N$-module. Furthermore, the second part of Lemma \ref{lem:hs} implies that the $W'$-module structure on $H^*(\B_N)$, as a sum of such $H^*_c(X)$ and $H^*_c(X_j)$, coincides with the restriction to $W'$ of the $W$-module structure defined by Springer theory on $H^*(\B_N)$. In sum, we have the following theorem.
\begin{thm} \label{thm:mainBCD} Let $N=N_\lambda \in \g$ where $\lambda=(1^{r_1}2^{r_2}\cdots)$. Define $\sgn_i$ to be the character of $W'\times A_\lambda$ such that on $A_\lambda$ it is the restriction of the character of $\tA_\lambda$ defined by
$$\sgn_i(z_i) = -1, \qquad \sgn_i(z_j) = 1 \textup{ for } i\neq j,$$
and on $W'$ it is trivial. Also let $\tau_i \colonequals z_i'z_{i-2}' \in A_{\lambda^{h,i}}$ and $\spr(\lambda^{h,i})^{\tau_i}$ be the character of $H^*(\lambda^{h,i})^{\tau_i}$.
Then we have the following equalities of characters of $W'\times A_\lambda$. Here we define
\begin{align*}
&\spr(\lambda^{h,i})(-,z_\alpha) \colonequals \spr(\lambda^{h,i})(-,z_\alpha'), &&\spr(\lambda^{v,i})(-,z_\alpha) \colonequals \spr(\lambda^{v,i})(-,z_\alpha') & \textup{ for } \alpha \neq i,
\\&\spr(\lambda^{h,i})(-,z_i) \colonequals \spr(\lambda^{h,i})(-,z_{i-2}'), &&\spr(\lambda^{v,i})(-,z_i) \colonequals \spr(\lambda^{v,i})(-,z_{i}').
\end{align*}

\begin{enumerate}[(a)]
\item Let $G=SO_{2n+1}$. Then,
\begin{align*}
\Res_{W' \times A_\lambda}^{W\times A_\lambda}\spr(\lambda) =& \sum_{i\geq 2,\ i \textup{ odd},\ r_i \textup{ odd}} \bigg( \spr(\lambda^{h,i}) +(r_i-1)\spr(\lambda^{v,i})\bigg)
\\&+ \sum_{i \geq 2,\ i \textup{ odd}, \ r_i \textup{ even}} \bigg( (1-\sgn_i)\spr(\lambda^{h,i})^{\tau_i} +(r_i-1+\sgn_i)\spr(\lambda^{v,i}) \bigg)
\\&+\sum_{i \textup{ even}} r_i\spr(\lambda^{v,i}).
\end{align*}

\item Let $G=Sp_{2n}$. Then,
\begin{align*}
\Res_{W' \times A_\lambda}^{W\times A_\lambda}\spr(\lambda) =& \sum_{i \textup{ even},\ r_i \textup{ odd}} \bigg( \spr(\lambda^{h,i}) +(r_i-1)\spr(\lambda^{v,i})\bigg)
\\&+ \sum_{i \textup{ even}, \ r_i \textup{ even}} \bigg( (1-\sgn_i)\spr(\lambda^{h,i})^{\tau_i} +(r_i-1+\sgn_i)\spr(\lambda^{v,i}) \bigg)
\\&+\sum_{i \textup{ odd}} r_i\spr(\lambda^{v,i}).
\end{align*}

\item Let $G=SO_{2n}$ and $\lambda$ is not very even. Then there is at most one $e\in \Z_{>0}$ such that $\lambda^{v,e}$ is very even. If such $e$ exists, then $e$ is odd, $r_e=2$, and $A_\lambda = \{*\}$. In this case we have
\begin{align*}
\Res_{W' \times A_\lambda}^{W\times A_\lambda}\spr(\lambda) =&\sum_{i \textup{ even}} r_i\spr(\lambda^{v,i}) + \spr(\lambda^{v,e}+)+\spr(\lambda^{v,e}-).
\end{align*}
If such $e$ does not exist, then we have
\begin{align*}
\Res_{W' \times A_\lambda}^{W\times A_\lambda}\spr(\lambda) =& \sum_{i\geq 2,\ i \textup{ odd},\ r_i \textup{ odd}}\bigg( \spr(\lambda^{h,i}) +(r_i-1)\spr(\lambda^{v,i}) \bigg)
\\&+ \sum_{i \geq 2,\ i \textup{ odd}, \ r_i \textup{ even}} \bigg(  (1-\sgn_i)\spr(\lambda^{h,i})^{\tau_i}+(r_i-1+\sgn_i) \spr(\lambda^{v,i}) \bigg)
\\&+\sum_{i \textup{ even}} r_i\spr(\lambda^{v,i}).
\end{align*}
%

If $\lambda$ is very even, then $A_\lambda=\{*\}$ and we have
\begin{align*}
\Res_{W' \times A_\lambda}^{W\times A_\lambda}\spr(\lambda+) =\Res_{W' \times A_\lambda}^{W\times A_\lambda}\spr(\lambda-) =\sum_{i \textup{ even}} r_i\spr(\lambda^{v,i}).
\end{align*}
\end{enumerate}
\end{thm}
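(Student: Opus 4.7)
The plan is to run the same geometric machine that produced Theorem~\ref{thm:mainA}, now with the Srinivasan--Shoji stratifications of $\cP_N$ (Cases I and II recalled above) replacing the cellular decomposition of $\P(\ker N)$. Fixing $N=N_\lambda$ and the projection $\pi:\B_N\to\cP_N$, the preimages $X$ and $X_j$ of $\widetilde Y$ and $\widetilde Y_j$ stratify $\B_N$, and Proposition~\ref{prop:Wiso} computes each $H^*_c(X_\bullet)$ as a $W'\times A_\lambda$-module in terms of $\spr(\lambda^{v,i})$ or $\spr(\lambda^{h,i})^{\tau_i}$ with prescribed $A_\lambda$-actions and even degree shifts. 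Passing to the associated long exact sequences in compactly supported cohomology, which are sequences of $W'\times A_\lambda$-modules by Proposition~\ref{prop:Wiso}, and using Lemma~\ref{lem:hs}(b) to identify the resulting $W'$-structure with the restriction of the Springer representation, one obtains
\begin{equation*}
\Res_{W'\times A_\lambda}^{W\times A_\lambda}\spr(\lambda)=\sum_{X,\,X_j}H^*_c(X_\bullet)
\end{equation*}
in the Grothendieck group of $W'\times A_\lambda$-modules.

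What then remains is explicit stratum-by-stratum accounting for each $i$ with $r_i\geq 1$. In Case~II the $r_i$ affine cells together contribute $r_i\spr(\lambda^{v,i})$. In Case~I, the $(a_1)$ cells contribute $(r_i-1)\spr(\lambda^{v,i})$ or $(r_i-2)\spr(\lambda^{v,i})$ according as $r_i$ is odd or even; the $(a_2)$ cell, present only when $r_i$ is even, is a sum of two copies of $\spr(\lambda^{v,i})$ swapped by $z_i$ and so yields $(1+\sgn_i)\spr(\lambda^{v,i})$; the $(b_1)$ cells have $Y_j\simeq\A^{r_i-j}-\A^{r_i-j-1}$ of Euler characteristic zero, and the even/odd degree shifts in Proposition~\ref{prop:Wiso}$(b_1)$ telescope in pairs to zero in the Grothendieck group, leaving only the exceptional middle cell $j=r_i/2$ (present when $r_i$ is even) whose $z_i$-twist from Lemma~\ref{lem:ANiso}$(b_1)$ contributes $(1-\sgn_i)\spr(\lambda^{h,i})^{\tau_i}$; the $(b_2)$ cell, present only when $r_i$ is odd, contributes $\spr(\lambda^{h,i})$. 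Summing over $i$ according to which index belongs to Case~I or Case~II produces the formulas in parts (a), (b), and the generic subcase of (c).

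For the two remaining subcases in (c): if $\lambda$ itself is very even then $r_i=0$ for all odd $i$, so only Case~II triggers and gives $\sum_{i\textup{ even}}r_i\spr(\lambda^{v,i})$, which is the same for $\spr(\lambda+)$ and $\spr(\lambda-)$ because $\B_{N_{\lambda\pm}}$ are isomorphic; and a short parity analysis of the operation $\lambda\mapsto\lambda^{v,i}$ shows that $\lambda^{v,i}$ can be very even only when $i$ is odd, $r_i=2$, and $\lambda$ has no other odd parts, in which case $A_\lambda=\{*\}$ and the two isotropic points of the Case~I $(a_2)$ stratum $Q_0$ correspond to the two $SO_{2n-2}$-orbits of the very-even partition $\lambda^{v,e}$, so the stratum contributes $\spr(\lambda^{v,e}+)+\spr(\lambda^{v,e}-)$ in place of two copies of $\spr(\lambda^{v,e})$. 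The main technical obstacle is the near-cancellation in Case~I $(b_1)$: individual strata have vanishing Euler characteristic, and the sole residue is the sign-twisted middle cell, which is precisely the mechanism that produces the $1-\sgn_i$ character in the statement; once that is traced carefully, the rest is a methodical bookkeeping exercise.
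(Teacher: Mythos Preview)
Your proposal is correct and follows the same approach as the paper: stratify $\B_N$ via $\pi^{-1}$ of the Srinivasan--Shoji strata, invoke Proposition~\ref{prop:Wiso} for each piece, pass to the Grothendieck group via the long exact sequences (which are $W'\times A_\lambda$-equivariant), and identify the $W'$-structure through Lemma~\ref{lem:hs}(b). Your stratum-by-stratum bookkeeping---the $(r_i-1)$ versus $(r_i-2)$ count in $(a_1)$, the $(1+\sgn_i)$ from the swap in $(a_2)$, the cancellation in $(b_1)$ leaving only the sign-twisted middle cell $(1-\sgn_i)\spr(\lambda^{h,i})^{\tau_i}$, and the handling of the very-even subcases---is exactly the computation the paper leaves implicit in the paragraph preceding the theorem.
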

Now Theorem \ref{thm:main} is a corollary of Theorem \ref{thm:mainBCD} if we evaluate equations above at $(id, id) \in W' \times A_\lambda$.

\begin{example} Let $G=Sp_{12}$ and $\lambda=(6,4,2) \vdash 12$. Then we have
\begin{align*}
\ec(\lambda) &=\ec(6,4)+\ec(6,2,2)+ \ec(4,4,2)
\\&=\ec(6,2)+2\ec(6,1,1)+2\ec(4,4)+\ec(4,2,2)+ 2\ec(3,3,2)
\\&=5\ec(6)+\ec(4,2)+4\ec(4,1,1)+6\ec(3,3)+5\ec(2,2,2)
\\&=14\ec(4)+18\ec(2,2)+14\ec(2,1,1)
\\&=42\ec(2)+50\ec(1,1)=142.
\end{align*}
\end{example}

\section{Betti numbers in some special cases}
In some special situation, we have not only the restriction of total Springer representations, i.e. the alternating sum $H^*(\B_N)$ of cohomology, but also that of each degree of the cohomology. If $G=GL_n$, it is already given in Theorem \ref{thm:bettiA}. Thus from now on we assume $G=SO_{2n+1}, Sp_{2n}, $ or $SO_{2n}$ and find analogous formulas.

We assume each of the following cases. Let $\lambda=(1^{r_1}2^{r_2} \cdots)$ and $\xi\geq 1$ be the smallest integer such that $r_\xi \neq 0$.
\begin{enumerate}[(a)]
\item $G=SO_{2n+1}$ or $SO_{2n}$.
\begin{enumerate}[($\textup{a}_1$)]
\item $\xi$ is even and $r_i\in\{0,1\}$ for any odd $i$. 
\item $\xi >1$ is odd, $r_i\in\{0,1\}$ for odd $i$ different from $\xi$, and $r_\xi\in\{1,2,3\}$.
\item $\xi=1$, $r_1=1$, and $\lambda^1$ (see \ref{note:partition}) satisfies either $(\textup{a}_1)$ or $(\textup{a}_2)$.
\item $\xi=1$, $r_1>1$, and $r_i\in\{0,1\}$ for odd $i$ different from 1.
\end{enumerate}
%
\item $G=Sp_{2n}$.
\begin{enumerate}[($\textup{b}_1$)]
\item $\xi$ is odd and $r_i\in\{0,1\}$ for any even $i$. 
\item $\xi$ is even, $r_i\in\{0,1\}$ for even $i$ different from $\xi$, and $r_\xi \in \{1,2,3\}$.
\end{enumerate}
\end{enumerate}

Recall that in the proof of Theorem \ref{thm:mainBCD} we used the long exact sequences of $W'\times A_N$-modules  to show that $H^*(\B_N)$ is the sum of alternating sums of the cohomology of each stratum in $\B_N$. It is easy to show that in each case above, such a long exact sequence splits into short exact sequences since Springer fibers have vanishing odd cohomology \cite{c-l-p}. Thus in this case we have the following theorem.

\begin{thm} \label{thm:bettiBCD} Suppose $G=SO_{2n+1}, Sp_{2n},$ or $SO_{2n}$. Let $\lambda=(1^{r_1}2^{r_2}\cdots)$ be a partition and $N=N_\lambda \in \g$. We define $d_i \colonequals \sum_{j>i} r_j$. Also recall the definition of $\sgn_i$ and $\tau_i  \in A_{\lambda^{h,i}}$ in Theorem \ref{thm:mainBCD}. Here we abuse notations to denote by $H^k(\lambda), H^k(\lambda^{h,i}), H^k(\lambda^{v,i})$ the character corresponding to each representation and define the character values of elements in $A_\lambda$ on $H^{k}(\lambda^{h,i}), H^{k}(\lambda^{v,i})$ as follows.
\begin{align*}
&H^{k}(\lambda^{h,i})(-,z_\alpha) \colonequals H^{k}(\lambda^{h,i})(-,z_\alpha'), &&H^{k}(\lambda^{v,i})(-,z_\alpha) \colonequals H^{k}(\lambda^{v,i})(-,z_\alpha') & \textup{ for } \alpha \neq i,
\\&H^{k}(\lambda^{h,i})(-,z_i) \colonequals H^{k}(\lambda^{h,i})(-,z_{i-2}'), &&H^{k}(\lambda^{v,i})(-,z_i) \colonequals H^{k}(\lambda^{v,i})(-,z_{i}').
\end{align*}
Then we have equalities of characters of $W'\times A_\lambda$ as follows.
\begin{enumerate}[$(\textup{a})$] 
\item Assume $G=SO_{2n+1}$ or $SO_{2n}$.
\begin{enumerate}[$(\textup{a}_1)$] 
\item If $\xi$ is even and $r_i\in\{0,1\}$ for any odd $i$, then 
$$\Res_{W' \times A_\lambda}^{W\times A_\lambda}H^k(\lambda)= \sum_{i >1 \textup{ odd},\ r_i=1} H^{k-2d_i}(\lambda^{h, i}) + \sum_{i \textup{ even}} \sum_{j=0}^{r_i-1} H^{k-2d_i-2j}(\lambda^{v,i}).$$


\item Suppose $\xi$ is odd, $\xi >1$, $r_i\in\{0,1\}$ for odd $i$ different from $\xi$, and $r_\xi\in\{1,2,3\}$. If $r_\xi=1$, then the formula in $(\textup{a}_1)$ is still valid. If $r_\xi=2$, then
\begin{align*}
\Res_{W' \times A_\lambda}^{W\times A_\lambda} H^k(\lambda) =&\sum_{i > \xi \textup{ odd},\ r_i=1} H^{k-2d_i}(\lambda^{h, i}) + \sum_{i  \textup{ even}} \sum_{j=0}^{r_i-1} H^{k-2d_i-2j}(\lambda^{v,i}) 
\\&+ H^{k-2d_\xi-2}(\lambda^{h,\xi})^{\tau_\xi}-\sgn_\xi H^{k-2d_\xi}(\lambda^{h,\xi})^{\tau_\xi}+ (1+\sgn_\xi)H^{k-2d_\xi}(\lambda^{v, \xi}) .
\end{align*}
If $r_\xi=3$, then 
\begin{align*}
\Res_{W' \times A_\lambda}^{W\times A_\lambda}H^k(\lambda)=&\sum_{i > \xi \textup{ odd},\ r_i=1} H^{k-2d_i}(\lambda^{h, i}) + \sum_{i \textup{ even}} \sum_{j=0}^{r_i-1} H^{k-2d_i-2j}(\lambda^{v,i}) 
\\&+ H^{k-2d_\xi-4}(\lambda^{h,\xi})^{\tau_\xi} -H^{k-2d_\xi-2}(\lambda^{h,\xi})^{\tau_\xi}+H^{k-2d_\xi-2}(\lambda^{h,\xi})
\\&+ H^{k-2d_\xi}(\lambda^{v, \xi}) +H^{k-2d_\xi-2}(\lambda^{v, \xi}) .
\end{align*}

\item Suppose $\xi=1$, $r_1=1$, and $\lambda^1$ satisfies $(\textup{a}_1)$ (resp. $(\textup{a}_2)$). Then the formula in $(\textup{a}_1)$ (resp. $(\textup{a}_2)$) is still valid if we replace $\xi$ by the smallest integer $\xi'$ such that $r_{\xi'}\neq0$ and $\xi'>1$.

\item Suppose $\xi=1$, $r_1>1$, and  $r_i\in\{0,1\}$ for odd $i$ different from 1. If $r_1$ is odd, then
$$\Res_{W' \times A_\lambda}^{W\times A_\lambda}H^k(\lambda) = \sum_{i>1 \textup{ odd},\ r_i =1} H^{k-2d_i}(\lambda^{h, i}) + \sum_{i \textup{ even}} \sum_{j=0}^{r_i-1} H^{k-2d_i-2j}(\lambda^{v,i}) + \sum_{j=0}^{r_1-2}H^{k-2d_1-2j}(\lambda^{v, 1}).$$
If $r_1$ is even, then
\begin{align*}
\Res_{W' \times A_\lambda}^{W\times A_\lambda}H^k(\lambda) =&\sum_{i>1 \textup{ odd},\ r_i=1} H^{k-2d_i}(\lambda^{h, i}) + \sum_{i \textup{ even}} \sum_{j=0}^{r_i-1} H^{k-2d_i-2j}(\lambda^{v,i}) 
\\&+ \sum_{j=0}^{r_1-2}H^{k-2d_1-2j}(\lambda^{v, 1}) + \sgn_1H^{k-2d_1-r_1+2}(\lambda^{v, 1}).
\end{align*}
\end{enumerate}
\item Assume $G=Sp_{2n}$.
\begin{enumerate}[$(\textup{b}_1)$]
\item If $\xi$ is odd and $r_i\in\{0,1\}$ for any even $i$, then
$$\Res_{W' \times A_\lambda}^{W\times A_\lambda}H^k(\lambda)= \sum_{i \textup{ even},\ r_i =1} H^{k-2d_i}(\lambda^{h, i}) + \sum_{i \textup{ odd}} \sum_{j=0}^{r_i-1} H^{k-2d_i-2j}(\lambda^{v,i}).$$

\item Suppose $\xi$ is even, $r_i\in\{0,1\}$ for even $i$ different from $\xi$, and $r_\xi \in \{1,2,3\}$. If $r_\xi=1$, then the formula in $(\textup{b}_1)$ is still valid. If $r_\xi=2$, then
\begin{align*}
\Res_{W' \times A_\lambda}^{W\times A_\lambda}H^k(\lambda) =&\sum_{i > \xi \textup{ even},\ r_i=1} H^{k-2d_i}(\lambda^{h, i}) + \sum_{i \textup{ odd}} \sum_{j=0}^{r_i-1} H^{k-2d_i-2j}(\lambda^{v,i}) 
\\&+ H^{k-2d_\xi-2}(\lambda^{h,\xi})^{\tau_\xi}-\sgn_\xi H^{k-2d_\xi}(\lambda^{h,\xi})^{\tau_\xi}+ (1+\sgn_\xi)H^{k-2d_\xi}(\lambda^{v, \xi}) .
\end{align*}
If $r_\xi=3$, then 
\begin{align*}
\Res_{W' \times A_\lambda}^{W\times A_\lambda}H^k(\lambda) =&\sum_{i > \xi \textup{ even},\ r_i=1} H^{k-2d_i}(\lambda^{h, i}) + \sum_{i \textup{ odd}} \sum_{j=0}^{r_i-1} H^{k-2d_i-2j}(\lambda^{v,i}) 
\\&+ H^{k-2d_\xi-4}(\lambda^{h,\xi})^{\tau_\xi} -H^{k-2d_\xi-2}(\lambda^{h,\xi})^{\tau_\xi}+H^{k-2d_\xi-2}(\lambda^{h,\xi})
\\&+ H^{k-2d_\xi}(\lambda^{v, \xi}) +H^{k-2d_\xi-2}(\lambda^{v, \xi}) .
\end{align*}
\end{enumerate}
\end{enumerate}
\end{thm}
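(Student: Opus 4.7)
The plan is to carry over the geometric setup of Section \ref{sec:BCD}: stratify $\cP_N$ according to the filtration $\ker N = W_0 \supset W_1 \supset \cdots$ and the Cases I.$(a_1)$--I.$(b_2)$, II of the induced decomposition of each $\P(W_{i-1}/W_i)$, then pull back to a stratification of $\B_N$ via $\pi$ and use the associated long exact sequences of cohomology with compact supports. Proposition \ref{prop:Wiso} identifies each graded piece $H^k_c(X_\alpha)$ with a shifted cohomology of a smaller Springer fiber $\B'_{N_{\lambda^{v,i}}}$ or $\B'_{N_{\lambda^{h,i}}}$, equivariantly for $W' \times A_\lambda$. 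In the proof of Theorem \ref{thm:mainBCD} only the alternating sum of these identifications was used; here the degree-by-degree refinement will be extracted by combining them with the vanishing of odd cohomology of Springer fibers \cite{c-l-p}.

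The key parity observation is that for strata of types I.$(a_1)$, I.$(a_2)$, I.$(b_2)$, and II, the degree shift in Proposition \ref{prop:Wiso} is $2\dim W_i + 2\dim Y_j$, which is even; hence $H^k_c(X_\alpha)$ is concentrated in even $k$. For strata of type I.$(b_1)$, however, the base $Y_j \simeq \A^{r_i-j} - \A^{r_i-j-1}$ carries compactly supported cohomology in two consecutive degrees, so $H^k_c(X_\alpha)$ is nonzero in both parities. The hypotheses of cases $(\textup{a}_1)$, $(\textup{a}_3)$, $(\textup{b}_1)$, and $(\textup{a}_4)$ with $r_1$ odd are precisely arranged to exclude any I.$(b_1)$ stratum---these arise only when the induced bilinear form on $W_{i-1}/W_i$ is symmetric and $r_i \geq 2$. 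In these cases every $H^k_c(X_\alpha)$ vanishes in odd degrees, so by induction along the stratification all intermediate $H^k_c(U_\ell)$ vanish in odd degrees as well, the connecting homomorphisms of each long exact sequence are forced to be zero, and the sequences split into short exact sequences of $W' \times A_\lambda$-modules. Matching the resulting direct sum decomposition against the degree shifts $2d_i + 2j$ then yields the claimed identities.

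The remaining cases $(\textup{a}_2)$ and $(\textup{b}_2)$ with $r_\xi \in \{2,3\}$, and $(\textup{a}_4)$ with $r_1$ even, contain I.$(b_1)$ strata at level $i = \xi$ (or $i = 1$). Here the odd-degree cohomology of these strata must be cancelled in the long exact sequence so that the total $H^*(\B_N)$ retains only even degrees. Tracking this cancellation $A_\lambda$-equivariantly is the source of the $\sgn_i$-twisted corrections: the anomalous action of $z_i$ by $-z_i'$ on odd-degree cohomology of an I.$(b_1)$ stratum with $r_i$ even and $j = r_i/2$, isolated in Lemma \ref{lem:ANiso}, is exactly what promotes the odd contribution to the sign-twisted term $\pm \sgn_\xi H^{k - 2d_\xi}(\lambda^{h,\xi})^{\tau_\xi}$ in the final formula. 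The main obstacle is this bookkeeping: one must argue separately for $r_\xi = 2$ (a single I.$(b_1)$ stratum alongside a I.$(a_2)$ stratum), for $r_\xi = 3$ (where an additional I.$(b_2)$ stratum enters), and for $(\textup{a}_4)$ with $r_1$ even (a sequence of I.$(b_1)$ strata from level $i = 1$, one of which has the anomalous sign behavior). In each subcase the pattern of odd-degree pieces and their cancellations must be aligned with the corresponding $\sgn_i$ combinations, with a consistency check provided by evaluating the formula at $(id, id) \in W' \times A_\lambda$ and recovering Theorem \ref{thm:mainBCD}.
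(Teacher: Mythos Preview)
Your approach is essentially the paper's own: stratify $\cP_N$ as in Section~\ref{sec:BCD}, invoke Proposition~\ref{prop:Wiso} to identify each $H^k_c$ of a stratum, and then use the vanishing of odd cohomology of Springer fibers \cite{c-l-p} to control the long exact sequences degree by degree. The paper records this in one sentence (the long exact sequences ``split into short exact sequences''); your more explicit parity bookkeeping --- even contributions from strata of types I.$(a_1)$, I.$(a_2)$, I.$(b_2)$, II, versus mixed-parity contributions from I.$(b_1)$ --- is exactly what underlies that sentence, and your account of how the anomalous sign in Lemma~\ref{lem:ANiso} produces the $\sgn_\xi$-twisted terms in $(\textup{a}_2)$ and $(\textup{b}_2)$ is correct.

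There is, however, a slip in your treatment of case $(\textup{a}_4)$. You assert that $(\textup{a}_4)$ with $r_1$ even contains I.$(b_1)$ strata at level $i=1$ and that the $\sgn_1$ term arises from the anomalous sign there. This cannot be right: the correction term involves $\lambda^{v,1}$, not $\lambda^{h,1}$ (which is not even defined). The point is that for $i=1$ the induced form $\brr{\ ,\ }$ on $W_0/W_1$ is inherited directly from $\br{\ ,\ }$, and a line $l \in \P(W_0)\setminus\P(W_1)$ lies in $\cP_N$ exactly when its image in $W_0/W_1$ is isotropic. Hence only the quadric $Q$ contributes at level $i=1$; the complement $C$ never meets $\cP_N$, so there are no I.$(b_1)$ or I.$(b_2)$ strata at this level at all. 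The term $\sgn_1 H^{k-2d_1-r_1+2}(\lambda^{v,1})$ instead comes from the I.$(a_2)$ stratum $Y\simeq\A^{r_1/2-1}\sqcup\A^{r_1/2-1}$, on which $z_1$ swaps the two components; its compactly supported cohomology sits in a single even degree and decomposes as $(1+\sgn_1)$. So $(\textup{a}_4)$ with $r_1$ even is in fact one of the easy cases (no odd-degree cancellation needed). You should also note that $(\textup{a}_3)$ can inherit I.$(b_1)$ strata when $\lambda^1$ satisfies $(\textup{a}_2)$, so your list of ``I.$(b_1)$-free'' cases needs adjustment.
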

In Section \ref{sec:closed} we use these formulas to calculate the Betti numbers of Springer fibers corresponding to two-row partitions.

%
%


%

\section{Short proof of the main theorem}
Indeed, the proof is simpler if we only want to prove Theorem \ref{thm:main}. First we recall the following induction statement of Springer representations from \cite[Theorem 1.3]{lu:indthm}.
\begin{prop}\label{prop:ind} Let $L$ be a Levi subgroup of a parabolic subgroup of $G$ with its Lie algebra $\mathfrak{l}$. Let $W_L$ be the Weyl group of $L$ with a natural embedding $W_L \hookrightarrow W$. Let $\B_L$ be the variety of Borel subgroups of $L$. Then for $N \in \mathfrak{l} \subset \g$ we have
$$H^*(\B_N) \simeq \Ind_{W_L}^W H^*((\B_L)_N)$$
as $W_L$-modules.
\end{prop}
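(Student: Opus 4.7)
The plan is to realize both sides as cohomologies of related geometric objects and then match up the $W$-actions. Fix a parabolic subgroup $P \subset G$ with Levi factor $L$ and unipotent radical $U$, so that $\mathfrak{p} = \mathfrak{l} \oplus \mathfrak{u}$. Let $\mathcal{Q}_N = \{Q \in G/P : N \in \mathrm{Lie}(Q)\}$ be the variety of $G$-conjugates of $P$ whose Lie algebras contain $N$; for $N \in \mathfrak{l}$, the class of $P$ itself lies in $\mathcal{Q}_N$.

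First, I would construct the natural map $f : \B_N \to \mathcal{Q}_N$ sending each Borel subalgebra containing $N$ to the unique conjugate of $\mathfrak{p}$ containing it. For $Q = gPg^{-1} \in \mathcal{Q}_N$, the fiber $f^{-1}(Q)$ is canonically isomorphic to the Springer fiber $(\B_L)_{\bar N}$ in $L$, where $\bar N$ is the image of $\mathrm{Ad}(g^{-1})N$ in $\mathfrak{l} = \mathfrak{p}/\mathfrak{u}$; in particular the fiber over $P$ is $(\B_L)_N$. Next, I would use the parabolic Grothendieck-Springer resolution $\widetilde{\g}_P = G \times^P \mathfrak{p} \to \g$ and factor the full Springer map through it, thereby expressing the Springer sheaf $K = \pi_! \, \qlbar$ as a pushforward of the analogous object for $L$; proper base change and the Leray spectral sequence then identify $H^*(\B_N)$ with $\Ind_{W_L}^W H^*((\B_L)_N)$ at the level of vector spaces.

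The main obstacle is tracking the $W$-equivariant structure throughout: the Weyl group acts on the Springer sheaf by monodromy on the regular semisimple locus of the Grothendieck-Springer resolution, and one must verify that restricting this $W$-action to $W_L$ and pulling it back to the slice corresponding to $L$ recovers the intrinsic Springer $W_L$-action on $H^*((\B_L)_N)$. This compatibility, which is exactly the content of \cite[Theorem 1.3]{lu:indthm}, follows by identifying the monodromy on the parabolic Grothendieck-Springer resolution with the $W/W_L$-cover given by the relative position map, and would be the step I would follow most carefully.
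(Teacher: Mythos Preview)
The paper does not supply its own proof of this proposition: it is simply recalled from \cite[Theorem 1.3]{lu:indthm} and used as a black box. Your proposal sketches the geometric framework behind Lusztig's argument (the parabolic map $\B_N \to \mathcal{Q}_N$, the parabolic Grothendieck--Springer resolution, and the identification of fibers with $(\B_L)_N$), but you then explicitly defer the crucial step---compatibility of the $W$-action under restriction to $W_L$---to the very same reference. In other words, your proposal and the paper's treatment amount to the same thing: both cite Lusztig's induction theorem rather than prove it from scratch. There is nothing wrong with your outline as a roadmap to Lusztig's proof, but it is not an independent argument, and the paper makes no pretense of giving one either.
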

Suppose $G=GL_n$. As every nilpotent element in $\g$ is a regular element in a Levi subalgebra of some parabolic subalgebra of $\g$, for $\lambda=(1^{r_1}2^{r_2} \cdots)$ it is easy to show that
$$\ec(\lambda) = \frac{n!}{(1!)^{r_1}(2!)^{r_2} \cdots}.$$
Thus Theorem \ref{thm:main} follows from easy induction on $n$.

Now if $G=SO_{2n+1}, Sp_{2n}, $ or $SO_{2n}$, using Proposition \ref{prop:ind} it suffices to show the statement when the given nilpotent element is distinguished. In this case we follow argument in Section \ref{sec:BCD}, which is a bit simpler than considering all the cases.

\begin{rmk} Note that vanishing of odd cohomology of Springer fibers \cite{c-l-p} implies the positivity of Euler characteristics of Springer fibers. Meanwhile, we do not need this fact in the proof of Theorem \ref{thm:mainA} and \ref{thm:mainBCD}. Thus it gives another proof that the Euler characteristic of any Springer fiber is positive without using \cite{c-l-p} (at least if $G$ is of classical type).
\end{rmk}

\section{Closed formula for the Betti numbers in two-row cases} \label{sec:closed}
Let $N=N_\lambda \in \g$ be a nilpotent element corresponding to $\lambda$, which consists of two rows (or if $G=SO_{2n+1}$, we assume that it consists of two rows with additional 1). Here we use Theorem \ref{thm:bettiA} and Theorem \ref{thm:bettiBCD} to give closed formulas for the multiplicities of irreducible representations of $A_\lambda$ in each $H^{k}(\lambda)$. As a result, we also have formulas for Betti numbers of Springer fibers of such type. There are many results about geometry of such Springer fibers, e.g. \cite{fung}, \cite{khovanov}, \cite{fresse2}, \cite{st-web}, \cite{russell}, \cite{eh-st}, \cite{wilbert}, \cite{wilbert2}, etc.

\begin{prop} Let $G=GL_n$ and $\lambda = (i, j) \vdash n$ such that $i\geq j\geq 0$. Then 
$$h^{2k}(\lambda)= {i+j \choose k}-{i+j \choose k-1}$$
for $0\leq k \leq j$ and $0$ otherwise.
\end{prop}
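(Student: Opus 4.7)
The plan is to apply Theorem \ref{thm:bettiA} to the two-row partition $\lambda=(i,j)$, extract a recursion for the Betti numbers $h^{2k}(i,j)$, and verify that the proposed closed form $f(i,j,k):=\binom{i+j}{k}-\binom{i+j}{k-1}$ satisfies the same recursion and boundary conditions, so that induction on $i+j$ concludes.

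First I would specialize Theorem \ref{thm:bettiA}. Evaluating characters at $\mathrm{id}\in W'$ gives an equality of dimensions. For $\lambda=(i,j)$ with $i>j>0$, the two parts $\lambda_1=i,\lambda_2=j$ are distinct, and $\lambda^{\lambda_1}=(i-1,j)$, $\lambda^{\lambda_2}=(i,j-1)$, so
\begin{equation*}
h^{2k}(i,j) \;=\; h^{2k}(i-1,j) \;+\; h^{2k-2}(i,j-1).
\end{equation*}
For $\lambda=(i,i)$, both $\lambda^{\lambda_1}$ and $\lambda^{\lambda_2}$ equal $(i,i-1)$, giving
\begin{equation*}
h^{2k}(i,i) \;=\; h^{2k}(i,i-1) \;+\; h^{2k-2}(i,i-1).
\end{equation*}
(All odd Betti numbers vanish by \cite{c-l-p}, so the alternating-sum formula of Theorem \ref{thm:bettiA} refines to each even degree.) The base case is $\lambda=(n)$: the Springer fiber is a point, so $h^0((n))=1$ and all higher Betti numbers vanish. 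This matches $f(n,0,0)=\binom{n}{0}-\binom{n}{-1}=1$ and $f(n,0,k)=0$ for $k\geq 1$ (treating the range $0\leq k\leq j$ as empty beyond $k=0$).

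Next I would verify that $f$ satisfies the same recursion. The key identity is a double application of Pascal's rule:
\begin{equation*}
\binom{i+j}{k}-\binom{i+j}{k-1} \;=\; \Bigl[\binom{i+j-1}{k}-\binom{i+j-1}{k-1}\Bigr] + \Bigl[\binom{i+j-1}{k-1}-\binom{i+j-1}{k-2}\Bigr],
\end{equation*}
which is exactly $f(i,j,k)=f(i-1,j,k)+f(i,j-1,k-1)$, matching the first recursion. The same identity, read with $i+j=2i$, handles the $i=j$ case once one checks $f(i,i-1,k)+f(i,i-1,k-1)=f(i,i,k)$; this is the same combinatorial identity. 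One also needs to confirm the vanishing outside $0\leq k\leq j$: for $k>j$ in the recursion, both right-hand terms vanish by induction (noting $k>j$ implies $k>j$ and $k-1\geq j>j-1$), and $f$ itself, by induction, vanishes in that range.

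The main (small) obstacles are bookkeeping issues at the corners. First, the boundary $k=j$ for the diagonal case $(i,i)$: then $h^{2i}(i,i-1)=0$ and the formula must equal $h^{2i-2}(i,i-1)=\binom{2i-1}{i-1}-\binom{2i-1}{i-2}$, which one checks equals $\binom{2i}{i}-\binom{2i}{i-1}$ via $\binom{2i-1}{i}=\binom{2i-1}{i-1}$. Second, one must be careful with the degenerate case $j=0$ (where $\lambda=(i)$ has only one part, so the sum in Theorem \ref{thm:bettiA} has a single term), but this just feeds the base of the induction. Everything else is a routine induction on $n=i+j$.
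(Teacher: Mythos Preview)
Your proposal is correct and follows essentially the same route as the paper: both specialize Theorem~\ref{thm:bettiA} to obtain the recursion $h^{2k}(i,j)=h^{2k}(i-1,j)+h^{2k-2}(i,j-1)$ (and its analogue for $i=j$), then verify the closed form via Pascal's rule by induction on $i+j$, with the same boundary check at $k=i$ in the diagonal case using $\binom{2i-1}{i}=\binom{2i-1}{i-1}$. One small remark: Theorem~\ref{thm:bettiA} is already stated degree by degree, so your parenthetical appeal to vanishing odd cohomology is unnecessary for extracting the recursion.
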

\begin{rmk} This is also calculated in \cite[Example 4.5]{fresse}.
\end{rmk}
\begin{proof} We use induction on the rank $n = i+j$. It is trivial when $n=1$. Now suppose $n \geq 2$ and that the statement is true up to rank $n-1$. If $\lambda=(i,j)\vdash n$ with $i>j$, then by Theorem \ref{thm:bettiA} we have
$$h^{2k}(\lambda) = h^{2k}(i-1,j)+h^{2k-2}(i,j-1),$$
thus $h^{2k}(\lambda)=0$ unless $0 \leq k \leq j$. If $k$ is in this range then
$$h^{2k}(\lambda) = {i+j-1\choose k}-{i+j-1 \choose k-2}={i+j \choose k} - {i+j \choose k-1}.$$
Likewise, if $\lambda = (i,i)$ then 
$$h^{2k}(\lambda) = h^{2k}(i,i-1)+h^{2k-2}(i,i-1) = {2i \choose k} - {2i \choose k-1}$$
if $0 \leq k \leq i$. (Note that this is true even when $k=i$ as ${2i-1 \choose i-1}-{2i-1 \choose i-2}= {2i \choose i} - {2i \choose i-1}$.)
\end{proof}

Let $G=SO_{2n+1}$ and $\lambda=(i,j,1) \vdash 2n+1$ where $i\geq j\geq 1$. Then $A_\lambda$ is isomorphic to 
\begin{align*}
&\Z/2\times \Z/2 &&\textup{ if }i>j>1,
\\ &\Z/2 &&\textup{ if } i \textup{ odd and either } i=j>1 \textup{ or } i>j=1, \textup{ and}
\\ &\textup{trivial } &&\textup{ otherwise.}
\end{align*}
If $i>j>1$, then we let $z_iz_j$ (resp. $z_jz_1$) be the generator of the first (resp. second) factor of $\Z/2$. Then we set 
\begin{align*}
h^k(\lambda)_{+,+} &\colonequals \br{H^k(\lambda), Id\times Id}_{A_\lambda}, &h^k(\lambda)_{+,-}&\colonequals\br{H^k(\lambda),Id\times \sgn}_{A_\lambda},
\\h^k(\lambda)_{-,+}&\colonequals\br{H^k(\lambda),\sgn\times Id}_{A_\lambda},&h^k(\lambda)_{-,-}&\colonequals \br{H^k(\lambda),\sgn\times \sgn}_{A_\lambda},
\end{align*}
to be the multiplicity of $Id \times Id, Id\times \sgn, \sgn \times Id, \sgn\times \sgn$, respectively, in $H^k(\lambda)$ as an $A_\lambda$-representation. If $i=j>1$ is odd, then we write 
\begin{align*}
h^k(\lambda)_{+,+} \colonequals\br{H^k(\lambda), Id}_{A_\lambda},&& h^k(\lambda)_{+,-} \colonequals\br{H^k(\lambda), \sgn}_{A_\lambda}
\end{align*}
to be the multiplicity of $Id, \sgn$, respectively, in $H^k(\lambda)$, and set $h^k(\lambda)_{-,+}= h^k(\lambda)_{-,-}=0$. If $i$ is odd and $j=1$, then we let 
\begin{align*}
H^k(\lambda)_{+,+}\colonequals\br{H^k(\lambda) Id}_{A_\lambda},&& h^k(\lambda)_{-,+}\colonequals\br{H^k(\lambda),\sgn}_{A_\lambda}
\end{align*} be the multiplicity of $Id, \sgn$, respectively, in $H^k(\lambda)$, and set $h^k(\lambda)_{+,-}= h^k(\lambda)_{-,-}=0$. If $A_\lambda$ is trivial, we set $h^k(\lambda)_{+,+}=h^k(\lambda)$ and $h^k(\lambda)_{+,-}=h^k(\lambda)_{-,+}=h^k(\lambda)_{-,-}=0$. (The reason for using these weird notations will be apparent immediately.) Then we have the following.

\begin{prop} Let $G=SO_{2n+1}$ and $\lambda=(i,j,1) \vdash 2n+1$ such that $i\geq j \geq 1$, and $i,j$ are odd unless $i=j$. Then $h^{\alpha}(\lambda)=0$ unless $0 \leq \alpha \leq j+2$ and $\alpha$ even. From now on we assume $\alpha =2k$ satisfies this condition.
\begin{enumerate}[(a)]
\item If $i>j>1$, then we have
\begin{align*}
h^{2k}(\lambda)_{+,+}={\frac{i+j}{2} \choose k}, \qquad h^{2k}(\lambda)_{+,-} = {\frac{i+j}{2} \choose k-2}, \qquad h^{2k}(\lambda)_{-,-}=0.
\end{align*}
Also, $h^{2k}(\lambda)_{-,+}=0$ unless $2k=j+1$, in which case 
$$h^{j+1}(\lambda)_{-,+}=\frac{i-j}{i+j+2}{\frac{i+j+2}{2} \choose \frac{j+1}{2}}.$$
\item If $i=j$ is odd and $i>1$, then 
$$h^{2k}(\lambda)_{+,+}= {i \choose k}, \qquad h^{2k}(\lambda)_{+,-} = {i \choose k-2}.$$
\item If $i>1$ is odd and $j=1$, then $$h^{2k}(\lambda)_{+,+} = {\frac{i+1}{2} \choose k}.$$ Also $h^{2k}(\lambda)_{-,+}=0$ unless $2k=2$, in which case $h^{2}(\lambda)_{-,+} = \frac{i-1}{2}$.
\item If $i=j>1$ is even, then $h^{2k}(\lambda)={i \choose k}.$
\item $h^{0}((1,1,1))=h^{2}((1,1,1))=1$.
\end{enumerate}
Thus, we always have $h^{2k}(\lambda)_{-,-}=0$ and
$$ h^{2k}(\lambda)_{+,+}={\frac{i+j}{2} \choose k}, \qquad h^{2k}(\lambda)_{+,-} = {\frac{i+j}{2} \choose k-2}, \qquad h^{2k}(\lambda)_{-,+}=\delta_{2k, j+1} \frac{i-j}{i+j+2}{\frac{i+j+2}{2} \choose \frac{j+1}{2}}$$
if they are not a priori zero.
\end{prop}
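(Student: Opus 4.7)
The plan is to proceed by induction on $n$, with $\lambda = (1,1,1)$ serving as the base case (verified directly in part (e): $\B_{N_{(1,1,1)}} \cong \P^1$, so $h^0 = h^2 = 1$). For the inductive step, each of the cases (a)--(d) falls under one of the branches of Theorem \ref{thm:bettiBCD} whose smaller partitions $\lambda^{h,i}$ and $\lambda^{v,i}$ are again of the shape $(i',j',1)$ treated by the proposition, so the recursion closes up.

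More precisely, for case (a), $\lambda = (I,J,1)$ with $I > J > 1$ both odd, the hypotheses of Theorem \ref{thm:bettiBCD} $(\textup{a}_3)$ hold with $\lambda^1 = (I,J)$ satisfying $(\textup{a}_2)$ at $r_{\xi'} = r_J = 1$; the corresponding $(\textup{a}_1)$-formula yields
$$\Res H^k(\lambda) \;=\; H^{k-2}(\lambda^{h,J}) \;+\; H^k(\lambda^{h,I}),$$
where $\lambda^{h,J}$ is case (a) with smaller $J$ (or case (c) if $J = 3$), and $\lambda^{h,I}$ is case (a) with smaller $I$ (or case (b) if $I = J+2$). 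For case (b), $\lambda = (I,I,1)$ with $I$ odd, the hypotheses of $(\textup{a}_3)$ with $\lambda^1 = (I,I)$ satisfying $(\textup{a}_2)$ at $r_{\xi'} = 2$ apply, yielding
$$\Res H^k(\lambda) = H^{k-2}(\lambda^{h,I})^{\tau_I} - \sgn_I H^k(\lambda^{h,I})^{\tau_I} + (1+\sgn_I) H^k(\lambda^{v,I}),$$
with $\lambda^{h,I} = (I,I-2,1)$ of type (a) and $\lambda^{v,I} = (I-1,I-1,1)$ of type (d). Case (c), $\lambda = (I,1,1)$ with $r_1 = 2$, uses $(\textup{a}_4)$ and reduces to $\lambda^{h,I} = (I-2,1,1)$ (again case (c)) together with $\lambda^{v,1} = (I)$, a single row whose Springer fiber is a point. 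Case (d), $\lambda = (I,I,1)$ with $I$ even, is $(\textup{a}_3)$ paired with $(\textup{a}_1)$ and reduces to $\lambda^{v,I} = (I-1,I-1,1)$ of case (b).

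To extract the refined $A_\lambda$-multiplicities from these recursions, one combines the action rule of Proposition \ref{prop:Wiso} and Lemma \ref{lem:ANiso}---whereby a generator $z_\alpha \in A_\lambda$ acts on each summand via $z'_\alpha$ on the smaller Springer cohomology, with $z_i$ shifted to $z'_{i-2}$ on the horizontal pieces and a sign twist in certain middle degrees---with the inductive formulas for $h^{2k}(\lambda^{h,i})_{\epsilon,\epsilon'}$ and $h^{2k}(\lambda^{v,i})_{\epsilon,\epsilon'}$. The binomial identities one invokes are all variants of Pascal's rule, most often
$$\binom{m}{k-1} + \binom{m}{k} = \binom{m+1}{k}$$
applied with $m = \tfrac{I+J-2}{2}$ or $m = \tfrac{I+J}{2}$.

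The main obstacle is the fourth component $h^{2k}(\lambda)_{-,+}$ in case (a), which is supported in the single degree $2k = J+1$. Its computation requires a delicate cancellation: the inductive $h^\ast_{-,+}$-contributions of $\lambda^{h,J}$ (shifted by $-2$) and of $\lambda^{h,I}$ (unshifted) must combine to exactly the residue $\tfrac{I-J}{I+J+2}\binom{(I+J+2)/2}{(J+1)/2}$, which is precisely what is left over from the Pascal cancellation. One must also separately handle the degenerate $A_\lambda$-structures that arise when $J=3$, when $I = J+2$, and at the boundary $j = 1$: in each instance $A_\lambda$ or $A_{\lambda^{h,i}}$ collapses to a smaller group and the multiplicity formula simplifies accordingly. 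Cases (b), (c), (d) follow the same template with fewer moving parts, since $A_\lambda$ has at most one sign character there and no ``$-,+$'' term can appear.
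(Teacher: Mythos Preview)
Your approach is essentially the one the paper takes: derive recursions from Theorem~\ref{thm:bettiBCD} in each case and verify the closed formulas by induction on $n$ using Pascal-type identities. Your identification of which branch of Theorem~\ref{thm:bettiBCD} applies in each case, and of the smaller partitions $\lambda^{h,i},\lambda^{v,i}$ appearing, agrees with the paper's recursions (a)--(e).

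One small slip: you write that in cases (b), (c), (d) ``no $(-,+)$ term can appear,'' but in case (c) the sign character of $A_\lambda\simeq\Z/2$ is precisely what is labeled $(-,+)$, and the proposition asserts $h^2(\lambda)_{-,+}=\tfrac{i-1}{2}\neq 0$. Your own recursion for case~(c) via $(\textup{a}_4)$ correctly produces a $\sgn_1\,H^{k-2}((I))$ term, which is exactly this contribution; you just misdescribed it in the summary sentence. Also, the identity needed for $h^{j+1}(\lambda)_{-,+}$ in case (a) is not literally Pascal's rule but the slightly more involved identity
\[
\frac{I-J-2}{I+J}\binom{\tfrac{I+J}{2}}{\tfrac{J+1}{2}}+\frac{I-J+2}{I+J}\binom{\tfrac{I+J}{2}}{\tfrac{J-1}{2}}=\frac{I-J}{I+J+2}\binom{\tfrac{I+J+2}{2}}{\tfrac{J+1}{2}},
\]
which does hold but requires a short direct check rather than a single application of Pascal.
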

\begin{proof} From Theorem \ref{thm:bettiBCD} we have the following relations.
\begin{enumerate}[(a)]
\item If $i>j>1$, then
$$h^{2k}(\lambda)_* = h^{2k}(i-2,j,1)_*+h^{2k-2}(i,j-2,1)_*$$
where $*$ can be any of $(+,+), (+,-), (-,+), (-,-)$.
\item If $i=j$ is odd and $i>1$, then 
\begin{align*}
h^{2k}(\lambda)_{+,+} &= h^{2k}(i-1,i-1,1)_{+,+}+h^{2k-2}(i,i-2,1)_{+,+}-h^{2k}(i,i-2,1)_{+,-},
\\h^{2k}(\lambda)_{+,-} &= h^{2k}(i-1,i-1,1)_{+,+}+h^{2k-2}(i,i-2,1)_{+,-}-h^{2k}(i,i-2,1)_{+,+}.
\end{align*}
\item If $i>1$ is odd and $j=1$, then
\begin{align*}
h^{2k}(\lambda)_* &= h^{2k}(i-2,1,1)_*+\delta_{k,1},
\end{align*}
where $*$ is either $(+,+)$ or $(-,+)$.
\item If $i=j$ is even, then
$$h^{2k}(\lambda) = h^{2k-2}(i-1,i-1,1)_{+,+}+ h^{2k}(i-1,i-1,1)_{+,+}.$$
\item If $i=j=1$, then $h^{0}((1,1,1))=h^{2}((1,1,1))=1$.
\end{enumerate}
Now the result follows from induction on the rank $n$.
\end{proof}

If $G=Sp_{2n}$, it is known that the $A_N$-action on $H^k(\B_N)$ factors through the quotient by the image of $\pm I \in Sp_{2n}$, where the image of $-I$ is $\prod_i z_i^{r_i} \in A_N$. We denote such a quotient by $\oA_N$. If $N=N_\lambda$ for $\lambda=(i,j) \vdash n$, then $\oA_\lambda \simeq \Z/2$ if $i,j>0$ are both even, and otherwise $\oA_{\lambda}$ is trivial. If $\oA_\lambda \simeq \Z/2$, then we let $h^k(\lambda)_{Id}, h^k(\lambda)_{\sgn}$ be the multiplicities of $Id, \sgn$, respectively, in $H^k(\lambda)$ as a $\oA_\lambda$-module. Thus in particular $h^k(\lambda) = h^k(\lambda)_{Id}+h^k(\lambda)_{\sgn}$. If $\oA_\lambda$ is trivial, we set $h^k(\lambda)_{Id} \colonequals h^k(\lambda)$ and $h^k(\lambda)_{\sgn} \colonequals 0$.

\begin{prop} Let $G=Sp_{2n}$ and $\lambda=(i,j) \vdash 2n$ such that $i\geq j \geq 0$, and $i, j$ are even unless $i=j$. Then we have $h^{2k}(\lambda)_{id}=0$ unless $0\leq 2k \leq j+1$, in which case
\begin{align*}
\textup{if } 0 \leq 2k\leq j, &\qquad h^{2k}(\lambda)_{Id}={ \floor{\frac{i+1}{2}}+\floor{\frac{j+1}{2}} \choose k},
\\\textup{if } 2k=j+1, &\qquad h^{2k}(\lambda)_{Id}=\frac{1}{2}{ \floor{\frac{i+1}{2}}+\floor{\frac{j+1}{2}} \choose k}.
\end{align*}
If $i, j$ are both even, then $h^{2k}_{\sgn}(\lambda)=0$ unless $0 \leq 2k \leq j$, in which case
$$h^{2k}(\lambda)_{\sgn} = {\frac{i+j}{2} \choose k-1}.$$
\end{prop}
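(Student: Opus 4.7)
The plan is to induct on $n=(i+j)/2$ using Theorem \ref{thm:bettiBCD}(b) as the recursive engine. The hypothesis splits into three parity cases: (A) $i>j$ with both $i,j$ even; (B) $i=j$ odd; (C) $i=j$ even. Each case invokes exactly one clause of Theorem \ref{thm:bettiBCD}(b), and the partitions appearing on the right-hand side of the resulting recursion always fall back into one of these three cases, so the induction closes. The base case $j=0$ (regular nilpotent, $\B_N$ a point) is immediate.

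In Case A, the clause $(b_2)$ with $r_\xi=1$ collapses to the form of $(b_1)$ and gives
\[
H^{2k}(i,j)=H^{2k-2}(i,j-2)+H^{2k}(i-2,j)
\]
as $W'\times A_\lambda$-modules, where the $Id$ and $\sgn$ characters of $\oA_\lambda$ pull back to the analogous characters of $\oA_{(i,j-2)}$ and $\oA_{(i-2,j)}$ in the obvious way under the convention $z_\alpha\mapsto z'_\alpha$ of Theorem \ref{thm:bettiBCD}. The binomial formulas then follow from Pascal's identity $\binom{m}{k-1}+\binom{m}{k}=\binom{m+1}{k}$. Case B gives $H^{2k}(i,i)=H^{2k}(i-1,i-1)+H^{2k-2}(i-1,i-1)$ with $\oA_\lambda$ trivial, and reduces immediately to Case C on the right-hand side.

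The delicate case is C, which uses $(b_2)$ with $r_\xi=2$. Here $\lambda^{h,i}=(i,i-2)$ falls under Case A and $\lambda^{v,i}=(i-1,i-1)$ under Case B. Two simplifications are crucial: first, the only characters $Id,\sgn$ of $\oA_{(i,i-2)}$ that appear in $H^{*}(\lambda^{h,i})$ are automatically trivial on $\tau_i=z'_iz'_{i-2}$ (since $z'_i$ and $z'_{i-2}$ take the same value on each), so $H^{*}(\lambda^{h,i})^{\tau_i}=H^{*}(\lambda^{h,i})$; second, $\oA_{(i-1,i-1)}$ is trivial and $z_i$ acts through $z'_i=\textup{id}$, so $H^{2k}(i-1,i-1)$ contributes equally to the $Id$ and $\sgn$ isotypic components. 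Noting that $\sgn_i$ restricts to the nontrivial character of $\oA_\lambda=\langle z_i\rangle$, the recursion becomes
\begin{align*}
h^{2k}(i,i)_{Id} &= h^{2k-2}(i,i-2)_{Id}-h^{2k}(i,i-2)_{\sgn}+h^{2k}(i-1,i-1),\\
h^{2k}(i,i)_{\sgn} &= h^{2k-2}(i,i-2)_{\sgn}-h^{2k}(i,i-2)_{Id}+h^{2k}(i-1,i-1),
\end{align*}
and substitution of the Case A and Case B formulas then yields the claim after elementary binomial manipulations, using $\binom{i-1}{(i-2)/2}=\tfrac12\binom{i}{i/2}$ to reconcile the boundary value $2k=j+1$ contributed by Case B with the change of behavior of the Case C recursion at $2k=i$.

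The main obstacle I expect is the character-theoretic bookkeeping in Case C, specifically verifying that the $Id$ and $\sgn$ characters of $\oA_\lambda=\langle z_i\rangle$ pull back correctly through the three different identifications $z_\alpha\mapsto z'_\alpha$ (for $\alpha\ne i$), $z_i\mapsto z'_i$ (on $\lambda^{v,i}$), and $z_i\mapsto z'_{i-2}$ (on $\lambda^{h,i}$), and that the factor of $\tfrac12$ at the Case B boundary lines up precisely with the single top-degree contribution in the Case C recursion. The small base cases $\lambda=(1,1),(2),(2,2)$ can be confirmed by direct calculation to anchor the induction.
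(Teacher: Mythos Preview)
Your proposal is correct and follows essentially the same approach as the paper: derive the three recursions from Theorem \ref{thm:bettiBCD}(b) (your Cases A, B, C correspond exactly to the paper's cases (a), (c), (b)) and then complete the proof by induction on $n$. The paper records the same three recursive identities and dispatches the binomial verification as ``easy induction''; your additional discussion of why $H^*(\lambda^{h,i})^{\tau_i}=H^*(\lambda^{h,i})$ and how the characters of $\oA_\lambda$ pull back is exactly the bookkeeping the paper leaves implicit.
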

\begin{proof} From Theorem \ref{thm:bettiBCD} we have the following relations.
\begin{enumerate}[(a)]
\item If $i >j$, then
\begin{align*}
h^{2k}(\lambda)_{Id} &= h^{2k}(i-2,j)_{Id} +h^{2k-2}(i,j-2)_{Id},
\\h^{2k}(\lambda)_{\sgn} &= h^{2k}(i-2,j)_{\sgn} +h^{2k-2}(i,j-2)_{\sgn}. 
\end{align*}
\item If $i=j$ is even, then
\begin{align*}
h^{2k}(\lambda)_{Id} &= h^{2k-2}(i,i-2)_{Id}- h^{2k}(i,i-2)_{\sgn} +h^{2k}(i-1,j-1),
\\h^{2k}(\lambda)_{\sgn} &= h^{2k-2}(i,i-2)_{\sgn}- h^{2k}(i,i-2)_{Id} +h^{2k}(i-1,j-1),
\end{align*}
\item If $i=j$ is odd, then
$$h^{2k}(\lambda) = h^{2k-2}(i-1,i-1)_{Id}+h^{2k-2}(i-1,i-1)_{\sgn}+h^{2k}(i-1,i-1)_{Id}+h^{2k}(i-1,i-1)_{\sgn}.$$
\end{enumerate}
Now the result follows from easy induction on $n$.
\end{proof}

If $G=SO_{2n}$, it is known that any the $A_N$-action on $H^k(\B_N)$ factors through the quotient by the image of $\pm I \in SO_{2n}$, where the image of $-I$ is $\prod_i z_i^{r_i}\in A_N$. We denote such a quotient by $\oA_N$. If $N=N_\lambda$ for $\lambda=(i,j) \vdash n$, then $\oA_\lambda$ is trivial.
\begin{prop} Let $G=SO_{2n}$ and $\lambda=(i,j) \vdash 2n$ such that $i\geq j \geq 0$, and $i, j$ are odd unless $i=j$. Then we have $h^{2k}(\lambda)=0$ unless $0\leq 2k \leq j$, in which case
\begin{align*}
\textup{if } 0 \leq 2k\leq j-1, &\qquad h^{2k}(\lambda)={ \frac{i+j}{2} \choose k},
\\\textup{if } 2k=j, &\qquad h^{2k}(\lambda)=\frac{1}{2}{ \frac{i+j}{2} \choose k}.
\end{align*}
\end{prop}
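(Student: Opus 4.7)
The plan is to mirror the inductive strategy of the preceding two propositions, using Theorem \ref{thm:bettiBCD}(a) to express $h^{2k}(\lambda)$ in terms of Betti numbers of two-row partitions of $2n-2$ and then inducting on $n$. The great simplification in the $SO_{2n}$ setting is that $\oA_\lambda$ is trivial for every two-row $\lambda$, so the $A_\lambda$-action on cohomology is trivial; consequently $\sgn_i$ restricts to the trivial character on $A_\lambda$, and $\tau_i = z'_i z'_{i-2} \in A_{\lambda^{h,i}}$ acts trivially on $H^*(\lambda^{h,i})$ (in every case we will need, $\lambda^{h,i}$ has exactly two odd parts, each of multiplicity one, so $\tau_i$ is the image of $-I \in SO_{2(n-1)}$, which acts trivially on cohomology). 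Hence every invariant subspace $H^*(-)^{\tau_i}$ collapses to the full cohomology, and every character-valued quantity reduces to a dimension.

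After establishing the base case $n = 2$ directly for $\lambda = (3,1)$ (regular nilpotent, $\B_N$ a point, $h^0 = 1$) and $\lambda = (2,2)$ (very even, $\B_N \simeq \P^1$, $h^0 = h^2 = 1$), split the inductive step into four subcases:
\begin{enumerate}[(A)]
\item $i > j \geq 3$, both odd: case $(\textup{a}_2)$ of Theorem \ref{thm:bettiBCD} with $r_\xi = 1$ reduces to $(\textup{a}_1)$, giving
\[
h^{2k}((i,j)) = h^{2k}((i-2, j)) + h^{2k-2}((i, j-2));
\]
\item $j = 1$, $i \geq 3$ odd: case $(\textup{a}_3)$ yields $h^{2k}((i,1)) = h^{2k}((i-2, 1))$, which iterates down to the base case $(3,1)$;
\item $i = j \geq 3$ odd: case $(\textup{a}_2)$ with $r_\xi = 2$ gives
\[
h^{2k}((i,i)) = h^{2k-2}((i, i-2)) - h^{2k}((i, i-2)) + 2 h^{2k}((i-1, i-1)),
\]
where the triviality of $A_\lambda$ absorbs the $\sgn_\xi$ term and doubles the $\lambda^{v,i} = (i-1, i-1)$ contribution, the latter itself being very even (matching the $\spr(\lambda^{v,e}+) + \spr(\lambda^{v,e}-)$ shape from Theorem \ref{thm:mainBCD}(c));
\item $i = j \geq 2$ even (very even $\lambda$): case $(\textup{a}_1)$ gives
\[
h^{2k}((i, i)) = h^{2k}((i-1, i-1)) + h^{2k-2}((i-1, i-1)).
\]
\end{enumerate}

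Substituting the inductive hypothesis into each recursion and applying Pascal's identity $\binom{m-1}{k} + \binom{m-1}{k-1} = \binom{m}{k}$ produces the desired closed form in the interior range $0 \leq 2k \leq j - 1$. The half-binomial at the top degree $2k = i$ in case (D) emerges because $h^i((i-1,i-1)) = 0$ (this degree sits outside the valid range $2k \leq i-2$), so only the second summand of the recursion survives, and then the identity $\binom{i-1}{i/2 - 1} = \binom{i-1}{i/2} = \tfrac{1}{2}\binom{i}{i/2}$ finishes the computation; an analogous boundary check is needed at $2k = i - 1$ in case (C) and at $2k = j - 1$ in case (A). The main obstacle is this bookkeeping: one must verify in every subcase that when a term on the right-hand side falls outside the valid range of the inductive hypothesis, the surviving term still recombines via Pascal and the symmetry $\binom{m-1}{k} = \binom{m-1}{m-1-k}$ to give exactly $\binom{(i+j)/2}{k}$ or its halved top value.
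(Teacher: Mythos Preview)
Your proposal is correct and follows essentially the same approach as the paper. The paper derives exactly your three recursions (your (A)/(B) combined, your (C), and your (D)) from Theorem~\ref{thm:bettiBCD} and then says ``the result follows from easy induction on $n$''; you simply spell out more of the justification (why $\tau_i$ and $\sgn_i$ trivialize, and the boundary Pascal checks) than the paper does.
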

\begin{proof} From Theorem \ref{thm:bettiBCD} we have the following relations.
\begin{enumerate}[(a)]
\item If $i>j$, then
$$h^{2k}(\lambda) = h^{2k}(i-2,j)+h^{2k-2}(i,j-2).$$
\item If $i=j$ is odd, then
$$h^{2k}(\lambda) = h^{2k-2}(i,j-2)-h^{2k}(i,j-2)+2h^{2k}(i-1,j-1).$$
\item If $i=j$ is even, then
$$h^{2k}(\lambda) = h^{2k-2}(i-1,j-1)+h^{2k}(i-1,j-1).$$
\end{enumerate}
Now the result follows from easy induction on $n$.
\end{proof}

\begin{rmk} When $G=Sp_{2n}$ or $SO_{2n}$, the cohomology rings of Springer fibers corresponding to two-row partitions are described in \cite[Theorem B]{eh-st} and \cite[Theorem A]{wilbert2}. Thus their Betti numbers can also be deduced from their descriptions.
\end{rmk}

%


%
%
%


\appendix
\section{Combinatorial proof of Theorem \ref{thm:mainA}} \label{app:comA}
\begin{proof}[Proof of Theorem \ref{thm:mainA}]We refer to \cite[Chapter 7.18]{stanley}. We let $\S_i$ be the symmetric group of $i$ elements. Define $CF_i$ to be the $\Q$-vector space spanned by irreducible characters of $\S_i$ and let $CF\colonequals \bigoplus_{i\in\N} CF_i$. We introduce a ring structure on $CF$ so that for $f \in CF_i$ and $g \in CF_j$, we define $f \circ g \colonequals \Ind^{\S_{i+j}}_{\S_{i}\times \S_j} f\times g$ and extend it to $CF$ by linearity. Also on each $CF_i$ we have a well-defined inner product $\br{\ , \ }$, which we extend to $CF$ be decreeing that $\br{f, g}=0$ for $f\in CF_i, g\in CF_j$ with $i\neq j$.

Then we have a well-defined ring isomorphism
$$\textup{ch}: CF \rightarrow \Lambda$$
where $\Lambda$ is the ring of symmetric polynomials of $x_1, x_2, \cdots$ with coefficients in $\Q$. It is uniquely defined by the condition that it sends $\Ind_{\S_\lambda}^{\S_n} Id_{\S_\lambda}$ to $h_\lambda$, the homogeneous symmetric polynomial corresponding to $\lambda$. Here $\S_\lambda\colonequals \S_{1}^{r_1} \times \S_{2}^{r_2}\times \cdots \subset \S_n$ if $\lambda=(1^{r_1}2^{r_2}\cdots) \vdash n$. Also $\textup{ch}$ respects inner products on both rings, where the inner product on $\Lambda$ is defined by $\br{h_{\lambda}, m_{\mu}}=\delta_{\lambda, \mu}$ and extended by linearity. Here $m_\mu$ is the monomial symmetric polynomial corresponding to $\mu$.

In order to prove the statement, it suffices to show that for any class function $f$ of $\S_{n-1}$
$$\br{ \spr(\lambda),\Ind_{\S_{n-1}}^{\S_n}f} = \br{ \bigoplus_{r_i \geq 1} r_i \spr(\lambda^i), f}.$$
Note that $\spr(\lambda) = \Ind_{\S_{\lambda}}^{\S_n} Id_{\S_\lambda}$ by Proposition \ref{prop:ind}. If we apply $\textup{ch}$, then it is equivalent to
$$\br{h_\lambda,\textup{ch}(f)h_1} = \br{ \sum_{r_i \geq 1} r_i h_{\lambda^i}, \textup{ch}(f)}.$$
As monomial symmetric functions $m_\mu$ for $\mu \vdash n-1$ are a basis of $CF_{n-1}$, it suffices to check that the above formula is true when $\textup{ch}(f)=m_\mu$ for any $\mu \vdash n-1$. By the definition of inner product on $\Lambda$ we have $\br{ \sum_{r_i \geq 1} r_i h_{\lambda^i}, m_\mu}= r_i$ if $\mu=\lambda^i$ for some $i\in \N$ and $0$ otherwise. On the other hand, for $\mu = (1^{r_1'}2^{r_2'}\cdots)$ it is easy to show that
$$m_\mu h_1 = \sum_{r_i'\geq 1} (r_{i+1}'+1)m_{(1^{r_1'}2^{r_2'} \cdots i^{r'_i-1} (i+1)^{r'_{i+1}+1} \cdots)} +(r_{1}'+1)m_{(1^{r_1'+1}2^{r_2'} \cdots )},$$
thus $\br{h_\lambda,m_\mu h_1}=r_i$ if $\mu = \lambda^i$ for some $i \in \N$ and $0$ otherwise. It suffices for the proof.
\end{proof}

\section{Exceptional types} \label{sec:exceptional}
In this section we assume that $G$ is a reductive group of exceptional type, i.e. of $E_6, E_7, E_8, F_4$, or $G_2$, over $\k$ such that $\ch \k$ is good. Then the corresponding Green functions are completely known. The following tables give the multiplicities of each irreducible character of $A_N$ in $H^*(\B_N)$ for any nilpotent $N \in \g$ if $G$ is of exceptional type, using the data of Green functions. Here we use the tables of Green functions given in \cite{lubeck}. Each column in the tables means the following.
\begin{enumerate}[(a)]
\item $N$: the type of a nilpotent element $N \in \g$. We use the Bala-Carter notation here.
\item $A_N$: the component group of the stabilizer of $N$ in $G$. A dot (.) means that $A_N$ is trivial. Otherwise $A_N$ is either $\S_2, \S_3, \S_4$, or $\S_5$, where $\S_n$ is the symmetric group of $n$ elements.
\item $\phi \in \widehat{A_N}$: an irreducible character of $A_N$. If $A_N$ is trivial, then $\phi$ is the identity, and we put a dot (.) in this case. Otherwise if $A_N=\S_n$, then we put the partition $\lambda \vdash n$ which parametrizes $\phi$. For example, $n$ means the identity character and $11 \cdots 1$ means the sign character.
\item $\br{H^*(\B_N),\phi}$: the multiplicity of $\phi$ in $H^*(\B_N)$.
\end{enumerate}
Note that in each case the Euler characteristic of $\B_N$ is given by $\sum_{\phi \in \widehat{A_N}} (\dim \phi)\br{H^*(\B_N),\phi}_{A_N}$ where the sum is over all the irreducible representations of $A_N$.

\newpage
\begin{longtable}{|c|c|c|c|}
\caption{Type $E_6$} \label{table:e6} \\
\hline $N$ & $A_N$ & $\phi \in \widehat{A_N}$ &$\br{H^*(\B_N),\phi}$ \\ \hline \hline
$E_6$&.&.&1\\\hline
$E_6(a_1)$&.&.&7\\\hline
$D_5$&.&.&27\\\hline
\multirow{2}{*}{$A_5+A_1$}&\multirow{2}{*}{$\S_2$} &2&57\\
&&11&21\\\hline
$A_5$&.&.&72\\\hline
$D_5(a_1)$&.&.&162\\\hline
$A_4+A_1$&.&.&216\\\hline
$D_4$&.&.&270\\\hline
$A_4$&.&.&432\\\hline
\multirow{3}{*}{$D_4(a_1)$}&\multirow{3}{*}{$\S_3$} &3&575\\
&&21&370\\
&&111&35\\\hline
$A_3+A_1$&.&.&1080\\\hline
$2A_2+A_1$&.&.&720\\\hline
$A_3$&.&.&2160\\\hline
$A_2+2A_1$&.&.&2160\\\hline
$2A_2$&.&.&1440\\\hline
$A_2+A_1$&.&.&4320\\\hline
\multirow{2}{*}{$A_2$}&\multirow{2}{*}{$\S_2$}&2&5940\\
&&11&2700\\\hline
$3A_1$&.&.&6480\\\hline
$2A_1$&.&.&12960\\\hline
$A_1$&.&.&25920\\\hline
$A_0$&.&.&51840\\\hline
\end{longtable}

\newpage
\newsavebox\ltmcbox

\begin{multicols}{2}
\medskip
\setbox\ltmcbox\vbox{
\makeatletter\col@number\@ne
\begin{longtable}{|c|c|c|c|}
\caption{Type $E_7$} \label{table:e7} \\
\hline $N$ & $A_N$ & $\phi \in \widehat{A_N}$ &$\br{H^*(\B_N),\phi}$ \\ \hline \hline
$E_7$&.&.&1\\\hline
$E_7(a_1)$&.&.&8\\\hline
$E_7(a_2)$&.&.&35\\\hline
\multirow{2}{*}{$D_6+A_1$}&\multirow{2}{*}{$\S_2$}&2&91\\
&&11&28\\\hline
$E_6$&.&.&56\\\hline
\multirow{2}{*}{$E_6(a_1)$}&\multirow{2}{*}{$\S_2$}&2&232\\
&&11&160\\\hline
$D_6$&.&.&126\\\hline
\multirow{2}{*}{$D_6(a_1)+A_1$}&\multirow{2}{*}{$\S_2$}&2&456\\
&&11&15\\\hline
$A_6$&.&.&576\\\hline
$D_6(a_1)$&.&.&882\\\hline
$D_5+A_1$&.&.&756\\\hline
\multirow{3}{*}{$D_6(a_2)+A_1$}&\multirow{3}{*}{$\S_3$} &3&1442\\
&&21&826\\
&&111&56\\\hline
$D_5$&.&.&1512\\\hline
\multirow{2}{*}{$(A_5+A_1)'$}&\multirow{2}{*}{$\S_2$}&2&3192\\
&&11&1176\\\hline
$D_6(a_2)$&.&.&2772\\\hline
$(A_5+A_1)''$&.&.&2016\\\hline
$A_5'$&.&.&4032\\\hline
$D_5(a_1)+A_1$&.&.&4536\\\hline
\multirow{2}{*}{$D_5(a_1)$}&\multirow{2}{*}{$\S_2$}&2&6426\\
&&11&2646\\\hline
$A_4+A_2$&.&.&4032\\\hline
\multirow{2}{*}{$A_4+A_1$}&\multirow{2}{*}{$\S_2$}&2&7308\\
&&11&4788\\\hline
$A_5''$&.&.&4032\\\hline
$D_4+A_1$&.&.&7560\\\hline
\multirow{2}{*}{$A_4$}&\multirow{2}{*}{$\S_2$}&2&14616\\
&&11&9576\\\hline
$A_3+A_2+A_1$&.&.&10080\\\hline
\multirow{2}{*}{$A_3+A_2$}&\multirow{2}{*}{$\S_2$}&2&19530\\
&&11&630\\\hline
$D_4$&.&.&15120\\\hline
\multirow{2}{*}{$D_4(a_1)+A_1$}&\multirow{2}{*}{$\S_2$}&2&26460\\
&&11&11340\\\hline
$A_3+2A_1$&.&.&30240\\\hline
\multirow{3}{*}{$D_4(a_1)$}&\multirow{3}{*}{$\S_3$} &3&32200\\
&&21&20720\\
&&111&1960\\\hline
$(A_3+A_1)'$&.&.&60480\\\hline
$2A_2+A_1$&.&.&40320\\\hline
$(A_3+A_1)''$&.&.&60480\\\hline
$A_3$&.&.&120960\\\hline
$2A_2$&.&.&80640\\\hline
$A_2+3A_1$&.&.&60480\\\hline
$A_2+2A_1$&.&.&120960\\\hline
\multirow{2}{*}{$A_2+A_1$}&\multirow{2}{*}{$\S_2$}&2&166320\\
&&11&75600\\\hline
$4A_1$&.&.&181440\\\hline
\multirow{2}{*}{$A_2$}&\multirow{2}{*}{$\S_2$}&2&332640\\
&&11&151200\\\hline
$3A_1'$&.&.&362880\\\hline
$3A_1''$&.&.&362880\\\hline
$2A_1$&.&.&725760\\\hline
$A_1$&.&.&1451520\\\hline
$A_0$&.&.&2903040\\\hline
\end{longtable}
\unskip
\unpenalty
\unpenalty}
\unvbox\ltmcbox
\medskip
\end{multicols}

\newpage
\begin{multicols}{2}
\medskip
\setbox\ltmcbox\vbox{
\makeatletter\col@number\@ne
\begin{longtable}{|c|c|c|c|}
\caption{Type $E_8$} \label{table:e8} \\
\hline $N$ & $A_N$ & $\phi \in \widehat{A_N}$ &$\br{H^*(\B_N),\phi}$ \\ \hline \hline
\endfirsthead
%
$E_8$&.&.&1\\\hline
$E_8(a_1)$&.&.&9\\\hline
$E_8(a_2)$&.&.&44\\\hline
\multirow{2}{*}{$E_7+A_1$}&\multirow{2}{*}{$\S_2$}&2&156\\
&&11&36\\\hline
$E_7$&.&.&240\\\hline
\multirow{2}{*}{$D_8$}&\multirow{2}{*}{$\S_2$}&2&366\\
&&11&231\\\hline
\multirow{2}{*}{$E_7(a_1)+A_1$}&\multirow{2}{*}{$\S_2$}&2&1010\\
&&11&50\\\hline
$E_7(a_1)$&.&.&1920\\\hline
\multirow{2}{*}{$D_8(a_1)$}&\multirow{2}{*}{$\S_2$}&2&1710\\
&&11&495\\\hline
$D_7$&.&.&2160\\\hline
\multirow{3}{*}{$E_7(a_2)+A_1$}&\multirow{3}{*}{$\S_3$} &3&4284\\
&&21&2128\\
&&111&84\\\hline
\multirow{3}{*}{$A_8$}&\multirow{3}{*}{$\S_3$} &3&5589\\
&&21&4263\\
&&111&594\\\hline
$E_7(a_2)$&.&.&8400\\\hline
$E_6+A_1$&.&.&6720\\\hline
\multirow{2}{*}{$D_7(a_1)$}&\multirow{2}{*}{$\S_2$}&2&15780\\
&&11&1500\\\hline
\multirow{3}{*}{$D_8(a_3)$}&\multirow{3}{*}{$\S_3$} &3&14205\\
&&21&175\\
&&111&1650\\\hline
\multirow{2}{*}{$D_6+A_1$}&\multirow{2}{*}{$\S_2$}&2&21840\\
&&11&6720\\\hline
\multirow{2}{*}{$E_6(a_1)+A_1$}&\multirow{2}{*}{$\S_2$}&2&27840\\
&&11&19200\\\hline
$A_7$&.&.&17280\\\hline
$E_6$&.&.&13440\\\hline
\multirow{2}{*}{$D_7(a_2)$}&\multirow{2}{*}{$\S_2$}&2&38880\\
&&11&23760\\\hline
$D_6$&.&.&30240\\\hline
\multirow{2}{*}{$E_6(a_1)$}&\multirow{2}{*}{$\S_2$}&2&55680\\
&&11&38400\\\hline
\multirow{2}{*}{$D_5+A_2$}&\multirow{2}{*}{$\S_2$}&2&58500\\
&&11&1980\\\hline
\multirow{2}{*}{$D_6(a_1)+A_1$}&\multirow{2}{*}{$\S_2$}&2&109440\\
&&11&3600\\\hline
$A_6+A_1$&.&.&69120\\\hline
$A_6$&.&.&138240\\\hline
\multirow{2}{*}{$D_6(a_1)$}&\multirow{2}{*}{$\S_2$}&2&151200\\
&&11&60480\\\hline
\multirow{7}{*}{$2A_4$}&\multirow{7}{*}{$\S_5$}&5&135240\\
&&41&99246\\
&&32&52206\\
&&311&12516\\
&&221&7686\\
&&2111&126\\
&&11111&0\\\hline
$D_5+A_1$&.&.&181440\\\hline
\multirow{3}{*}{$A_5+A_2$}&\multirow{3}{*}{$\S_3$} &3&346080\\
&&21&198240\\
&&111&13440\\\hline
\multirow{2}{*}{$D_6(a_2)$}&\multirow{2}{*}{$\S_2$}&2&408240\\
&&11&257040\\\hline
\multirow{2}{*}{$A_5+2A_1$}&\multirow{2}{*}{$\S_2$}&2&383040\\
&&11&141120\\\hline
$(A_5+A_1)'$&.&.&483840\\\hline
$D_5(a_1)+A_2$&.&.&362880\\\hline
$D_5$&.&.&362880\\\hline
$A_4+A_3$&.&.&241920\\\hline
\multirow{2}{*}{$(A_5+A_1)''$}&\multirow{2}{*}{$\S_2$}&2&766080\\
&&11&282240\\\hline
\multirow{2}{*}{$D_4+A_2$}&\multirow{2}{*}{$\S_2$}&2&574560\\
&&11&30240\\\hline
$A_5$&.&.&967680\\\hline
$D_5(a_1)+A_1$&.&.&1088640\\\hline
$A_4+A_2+A_1$&.&.&483840\\\hline
$A_4+A_2$&.&.&967680\\\hline
\multirow{2}{*}{$A_4+2A_1$}&\multirow{2}{*}{$\S_2$}&2&1103760\\
&&11&347760\\\hline
\multirow{2}{*}{$D_5(a_1)$}&\multirow{2}{*}{$\S_2$}&2&1542240\\
&&11&635040\\\hline
\multirow{2}{*}{$A_4+A_1$}&\multirow{2}{*}{$\S_2$}&2&1753920\\
&&11&1149120\\\hline
$2A_3$&.&.&1209600\\\hline
$D_4+A_1$&.&.&1814400\\\hline
\multirow{2}{*}{$D_4(a_1)+A_2$}&\multirow{2}{*}{$\S_2$}&2&2116800\\
&&11&907200\\\hline
$A_3+A_2+A_1$&.&.&2419200\\\hline
\multirow{2}{*}{$A_4$}&\multirow{2}{*}{$\S_2$}&2&3507840\\
&&11&2298240\\\hline
\multirow{2}{*}{$A_3+A_2$}&\multirow{2}{*}{$\S_2$}&2&4687200\\
&&11&151200\\\hline
\multirow{3}{*}{$D_4(a_1)+A_1$}&\multirow{3}{*}{$\S_3$} &3&3864000\\
&&21&2486400\\
&&111&235200\\\hline
$A_3+2A_1$&.&.&7257600\\\hline
$D_4$&.&.&3628800\\\hline
$2A_2+2A_1$&.&.&4838400\\\hline
\multirow{3}{*}{$D_4(a_1)$}&\multirow{3}{*}{$\S_3$} &3&7728000\\
&&21&4972800\\
&&111&470400\\\hline
$A_3+A_1$&.&.&14515200\\\hline
$2A_2+A_1$&.&.&9676800\\\hline
\multirow{2}{*}{$2A_2$}&\multirow{2}{*}{$\S_2$}&2&13305600\\
&&11&6048000\\\hline
$A_2+3A_1$&.&.&14515200\\\hline
$A_3$&.&.&29030400\\\hline
$A_2+2A_1$&.&.&29030400\\\hline
\multirow{2}{*}{$A_2+A_1$}&\multirow{2}{*}{$\S_2$}&2&39916800\\
&&11&18144000\\\hline
$4A_1$&.&.&43545600\\\hline
\multirow{2}{*}{$A_2$}&\multirow{2}{*}{$\S_2$}&2&79833600\\
&&11&36288000\\\hline
$3A_1$&.&.&87091200\\\hline
$2A_1$&.&.&174182400\\\hline
$A_1$&.&.&348364800\\\hline
$A_0$&.&.&696729600\\\hline
\end{longtable}
\unskip
\unpenalty
\unpenalty}
\unvbox\ltmcbox
\medskip
\end{multicols}

\newpage
\begin{longtable}{|c|c|c|c|}
\caption{Type $F_4$} \label{table:f4} \\
\hline $N$ & $A_N$ & $\phi \in \widehat{A_N}$ &$\br{H^*(\B_N),\phi}$ \\ \hline \hline
 $F_4$ & . & . & 1 \\\hline
\multirow{2}{*}{ $F_4(a_1)$} & \multirow{2}{*}{$\S_2$}& 2 & 5 \\
&  & 11 & 2 \\\hline
\multirow{2}{*}{ $F_4(a_2)$ }&\multirow{2}{*}{$\S_2$} &2 & 14 \\
 & & 11 & 2 \\\hline
 $B_3$ & . & . & 24 \\\hline
 $C_3$ & . & . & 24 \\\hline
\multirow{5}{*}{ $F_4(a_3)$ }& \multirow{5}{*}{$\S_4$} & 4 & 42 \\
& & 31& 19 \\
& & 22& 10 \\
& & 211& 1 \\
& & 1111& 0 \\\hline
\multirow{2}{*}{ $C_3(a_1)$} & \multirow{2}{*}{$\S_2$}& 2 & 96 \\
 & & 11& 24 \\\hline
\multirow{2}{*}{ $B_2$} & \multirow{2}{*}{$\S_2$} & 2 & 96 \\
& & 11 & 48 \\\hline
 $\tilde{A}_2 + A_1$ & . & . & 96 \\\hline
 $A_2 + \tilde{A}_1$ & . & . & 96 \\\hline
 $\tilde{A}_2$ & . & . & 192 \\\hline
\multirow{2}{*}{ $A_2$} & \multirow{2}{*}{$\S_2$}& 2 & 168 \\
& & 11 & 24 \\\hline
 $A_1 + \tilde{A}_1$ & . & . & 288 \\\hline
\multirow{2}{*}{ $\tilde{A}_1$ }& \multirow{2}{*}{$\S_2$}& 2 & 432 \\
&  & 11 & 144 \\\hline
 $A_1$ & . & . & 576 \\\hline
 $A_0$ & . & . & 1152 \\\hline
\end{longtable}

\begin{longtable}{|c|c|c|c|}
\caption{Type $G_2$} \label{table:g2} \\
\hline $N$ & $A_N$ & $\phi \in \widehat{A_N}$ &$\br{H^*(\B_N),\phi}$ \\ \hline \hline
 $G_2$ & . & . & 1 \\\hline
\multirow{3}{*}{ $G_2(a_1)$} & \multirow{3}{*}{$\S_3$} & 3& 3 \\
 &  & 21& 1 \\
 &  & 111& 0 \\\hline
 $\tilde{A}_1$ & . & . & 6 \\\hline
 $A_1$ & . & . & 6 \\\hline
 $A_0$ & . & . & 12 \\\hline
\end{longtable}

\bibliographystyle{amsalphacopy}
\bibliography{induct}

\end{document}